\topskip \setlength{\parindent}{0pt} \setlength{\parskip}{5pt plus
\theoremstyle{remark}
\theoremstyle{plain}
\newtheorem{theorem}{Theorem}[section]
\newtheorem{lemma}[theorem]{Lemma}
\newtheorem{proposition}[theorem]{Proposition}
\newtheorem{corollary}[theorem]{Corollary}
\newcommand{\asc}{\text{asc}}
\newcommand{\zero}{\text{zeros}}
\newcommand{\fwd}{\text{fwd}}
\newcommand{\RLm}{\text{RLmax}}
\begin{document}
\title{Some enumerative results related to ascent sequences}
\author{Toufik Mansour\\
\small Department of Mathematics, University of Haifa, 31905 Haifa, Israel\\[-0.8ex]
\small\texttt{tmansour@univ.haifa.ac.il}\\[1.8ex]
Mark Shattuck\\
\small Department of Mathematics, University of Haifa, 31905 Haifa, Israel\\[-0.8ex]
\small\texttt{shattuck@math.utk.edu, maarkons@excite.com}\\[1.8ex]
}

\date{\small }
\maketitle

\begin{abstract}
An \emph{ascent sequence} is one consisting of non-negative integers in which the size of each letter is restricted by the number of ascents preceding it in the sequence.  Ascent sequences have recently been shown to be related to (2+2)-free posets and a variety of other combinatorial structures.  In this paper, we prove in the affirmative some recent conjectures concerning pattern avoidance for ascent sequences.  Given a pattern $\tau$, let $\mathcal{S}_\tau(n)$ denote the set of ascent sequences of length $n$ avoiding $\tau$.  Here, we show that the joint distribution of the statistic pair $(\asc,\zero)$ on $\mathcal{S}_{0012}(n)$ is the same as $(\asc,\RLm)$ on the set of $132$-avoiding permutations of length $n$.  In particular, the ascent statistic on $\mathcal{S}_{0012}(n)$ has the Narayana distribution.  We also enumerate $S_\tau(n)$ when $\tau=1012$ and $\tau=0123$ and confirm the conjectured formulas in these cases.  We combine combinatorial and algebraic techniques 
 to prove our results, in two cases, making use of the kernel method.  Finally, we discuss the case of avoiding $210$ and determine two related recurrences.
\end{abstract}

\noindent{\em Keywords:} ascent sequence, permutation, kernel method, Narayana number

\noindent 2010 {\em Mathematics Subject Classification:} 05A15, 05A05, 05A18, 05A19

\section{Introduction}

An \emph{ascent} in a sequence $x_1x_2\cdots x_k$ is a place $j \geq 1$ such that $x_j<x_{j+1}$.  An \emph{ascent sequence} $x_1x_2\cdots x_n$ is one consisting of non-negative integers satisfying $x_1=0$ and for all $i$ with $1<i\leq n$,
$$x_i\leq \asc(x_1x_2\cdots x_{i-1})+1,$$
where $\asc(x_1x_2\cdots x_k)$ is the number of ascents in the sequence $x_1x_2\cdots x_k$.  An example of such a sequence is $01013212524$, whereas $01003221$ is not, because $3$ exceeds $\asc(0100)+1=2$.  Starting with the paper by Bousquet-M{\'e}lou, Claesson, Dukes, and Kitaev \cite{BCD}, where they were related to the (2+2)-free posets and the generating function was determined, ascent sequences have since been studied in a series of papers where connections to many other combinatorial structures have been made.  See, for example, \cite{DP,DRS,KR} as well as \cite[Section 3.2.2]{K} for further information.

In this paper, we answer some recent conjectures in the affirmative which were raised by Duncan and Steingr\'{\i}msson \cite{DS} concerning the avoidance of patterns by ascent sequences.  The patterns considered are analogous to patterns considered originally on permutations and later on other structures such as $k$-ary words and finite set partitions.

By a \emph{pattern}, we will mean a sequence of non-negative integers, where repetitions are allowed.  Let $\pi=\pi_1\pi_2\cdots \pi_n$ be an ascent sequence and $\tau=\tau_1\tau_2\cdots\tau_m$ be a pattern.  We will say that $\pi$ \emph{contains} $\tau$ if $\pi$ has a subsequence that is order isomorphic to $\tau$, that is, there is a subsequence $\pi_{f(1)},\pi_{f(2)},\ldots,\pi_{f(m)}$, where $1\leq f(1)<f(2)<\cdots<f(m)\leq n$, such that for all $1 \leq i,j \leq m$, we have $\pi_{f(i)}<\pi_{f(j)}$ if and only if $\tau_i<\tau_j$ and $\pi_{f(i)}>\pi_{f(j)}$ if and only if $\tau_i>\tau_j$. Otherwise, the ascent sequence $\pi$ is said to \emph{avoid} the pattern $\tau$.  For example, the ascent sequence $0120311252$ has three occurrences of the pattern $100$, namely, the subsequences $211$, $311$, and $322$, but avoids the pattern $210$.  Note that within an occurrence of a pattern $\tau$, letters corresponding to equal letters in $\tau$ must be equal within the occurrence.

Following \cite{DS}, we will write patterns for ascent sequences using non-negative integers, though patterns for other structures like permutations have traditionally been written with positive integers, to be consistent with the usual notation for ascent sequences which contains $0$'s.  Thus, the traditional patterns will have different names here; for example $123$ becomes $012$ and $221$ becomes $110$.

If $\tau$ is a pattern, then let $\mathcal{S}_\tau(n)$ denote the set of ascent sequences of length $n$ that avoid $\tau$ and $A_\tau(n)$ the number of such sequences. The set of $\emph{right-to-left maxima}$ in a sequence of numbers $a_1a_2\cdots a_n$ is the set of $a_i$ such that $a_i>a_j$ for all $j>i$.  Let RLmax($x$) be the number of right-to-left maxima in a sequence $x$.  Recall that the Catalan numbers are given by $C_n=\frac{1}{n+1}\binom{2n}{n}$ and that the Narayana numbers given by $N_{n,k}=\frac{1}{n}\binom{n}{k}\binom{n}{k-1}$, $1 \leq k \leq n$, refine the Catalan numbers in that $C_n=\sum_{k=1}^n N_{n,k}$.  It is well known that the number of $132$-avoiding permutations of length $n$ having exactly $k$ ascents is given by $N_{n,k+1}$.

Let $\fwd(x)$ be the length of the maximal final weakly decreasing sequence in an ascent sequence $x$.  For example, $\fwd(010013014364332)=5$ since $64332$ has length $5$.  We also let $\zero(x)$ denote the number of $0$'s in an ascent sequence $x$.

We now state here the conjectures from \cite{DS} which we prove in the affirmative in the following sections.

\textbf{Conjecture 3.2.} \emph{We have $A_{0012}(n)=C_n$, the $n$-th Catalan number. Moreover, the bistatistic $(\asc,\fwd)$ on $\mathcal{S}_{0012}(n)$ has the same distribution as $(\asc,RLmax)$ does on permutations avoiding the pattern $132$.  In particular, this implies that the number of ascents has the Narayana distribution on $\mathcal{S}_{0012}(n)$.  Also, the bistatistics $(\asc,\fwd)$ and $(\asc,\zero)$ have the same distribution on $0012$-avoiding ascent sequences.}

\textbf{Conjecture 3.4.} \emph{The number $A_{0123}(n)$ equals the number of Dyck paths of semilength $n$ and height at most $5$.  See sequence A080937 in \cite{Sl}.}

See Corollary \ref{c1} and Theorems \ref{t2} and \ref{t4} below.  We also show half of the following conjecture; see Theorem \ref{t3} below.

\textbf{Conjecture 3.5.} \emph{The patterns $0021$ and $1012$ are Wilf equivalent, and $A_{0021}(n)=A_{1012}(n)$ is given by the binomial transform of the Catalan numbers, which is sequence A007317 in \cite{Sl}.}

We remark that in our proof of Conjecture 3.2 in the next section, we first consider the joint distribution $(\asc,\zero,\fwd)$ on the members of $\mathcal{S}_{0012}(n)$ not ending in $0$ and determine a functional equation satisfied by its generating function which we denote by $g(x,y;u,v)$.  Using the \emph{kernel method} \cite{BBD}, we are then able to show $g(x,y;1,u)=g(x,y;u,1)$, which implies the equidistribution of $(\asc,\fwd)$ and $(\asc,\zero)$ on $\mathcal{S}_{0012}(n)$.
Comparison with the generating function for the distribution of $(\asc,\RLm)$ on $132$-avoiding permutations then gives the first part of Conjecture 3.2. Furthermore, an expression for the generating function $g(x,y;u,v)$ may be recovered and the full distribution for $(\asc,\zero,\fwd)$ can be obtained by extracting the coefficient of $x^n$ from it.

In the third section, we enumerate $S_\tau(n)$ when $\tau=1012$ and $\tau=0123$, in the former case, making use of the kernel method.  In this proof, we first describe refinements of the numbers $A_{1012}(n)$ by introducing appropriate auxiliary statistics on $\mathcal{S}_{1012}(n)$ and then write recurrences for these refined numbers.  The recurrences may then be expressed as a functional equation which may be solved using the kernel method.  See \cite{Z} for a further description and examples of this strategy of refinement in determining an explicit formula for a sequence. We conclude with a discussion of the case of avoiding $210$ by ascent sequences.  We determine two related recurrences using combinatorial arguments which perhaps may shed some light on Conjecture 3.3 in \cite{DS}.

\section{Distribution of some statistics on $\mathcal{S}_{0012}(n)$}

In order to determine the distributions of some statistics on $\mathcal{S}_{0012}(n)$, we first refine the set as follows.  Given $n \geq 1$, $0 \leq m \leq n-1$, and $1 \leq r,l \leq n-m$, let $A_{n,m,r,\ell}$ denote the subset of $\mathcal{S}_{0012}(n)$ whose members have $m$ ascents, $r$ zeros, and $\fwd$ value $\ell$.  For example, $\pi=012334004332 \in A_{12,5,3,4}$.  Let $a_{n,m,r,\ell}=|A_{n,m,r,\ell}|$; note that $\sum_{m,r,\ell}a_{n,m,r,\ell}=A_{0012}(n)$ for all $n$.

In what follows, it will be more convenient to deal with the members of $\mathcal{S}_{0012}(n)$ that do not end in a $0$.  Let $B_{n,m,r,\ell}$ denote the subset of $A_{n,m,r,\ell}$ whose members do not end in a $0$ and let $b_{n,m,r,\ell}=|B_{n,m,r,\ell}|$.  The array $b_{n,m,r,\ell}$ may be determined as described in the following lemma.

\begin{lemma}\label{l1}
The array $b_{n,m,r,\ell}$ may assume non-zero values only when $n \geq 2$, $1 \leq m \leq n-1$, $1 \leq r \leq n-m$, and $1 \leq \ell \leq n-m$.  It is determined for $n \geq 3$ by the recurrences
\begin{equation}\label{l1e1}
b_{n,m,1,\ell}=\sum_{i=1}^{n-m}\sum_{j=0}^{t} b_{n-j-1,m-1,i-j,\ell-j}, \qquad m \geq 2,
\end{equation}
where $t=\min\{i-1,\ell-1\}$, and
\begin{equation}\label{l1e2}
b_{n,m,r,\ell}=b_{n-1,m,r-1,\ell}+\sum_{j=\ell+1}^{n-m} b_{n-1,m-1,r-1,j}, \qquad m \geq 2 \text{ and } r \geq 2,
\end{equation}
and by the condition
\begin{equation}\label{l1e3}
b_{n,1,r,\ell}=\begin{cases} 1, & \text{if} \text{~}\text{~} r+\ell=n; \\ 0, & otherwise, \end{cases} \end{equation}
if $n \geq 2$.
\end{lemma}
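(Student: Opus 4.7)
My plan is to establish the three assertions separately via combinatorial maps. The initial condition \eqref{l1e3} is direct: a member of $B_{n,1,r,\ell}$ has exactly one ascent, starts with $0$, and satisfies the ascent condition, so it is forced to have the form $0^a 1^b$ with $a,b\geq 1$ and $a+b=n$; such a sequence involves only two distinct values and therefore trivially avoids $0012$. Reading off the statistics gives $a=r$, $b=\ell$, and $b_{n,1,r,\ell}=1$ precisely when $r+\ell=n$.

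For \eqref{l1e2}, the key structural claim is that if $x\in B_{n,m,r,\ell}$ with $r\geq 2$, then the last zero of $x$ sits at position $p=n-\ell$. The bound $p\leq n-\ell$ is automatic because the tail of length $\ell$ is positive and weakly decreasing; the reverse inequality uses $0012$-avoidance directly, since if $p<n-\ell$ then positions $1,\,p,\,n-\ell,\,n-\ell+1$ yield the forbidden pattern via the values $0,\,0,\,x_{n-\ell},\,x_{n-\ell+1}$ with $0<x_{n-\ell}<x_{n-\ell+1}$. With this in hand, I delete $x_p$ from $x$ and split on the comparison of $x_{p-1}$ and $x_{p+1}$. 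If $x_{p-1}<x_{p+1}$, the ascent at $p$ is replaced by a new one at $p-1$ and $\fwd$ is unchanged, so the result lies in $B_{n-1,m,r-1,\ell}$, yielding the first summand. If $x_{p-1}\geq x_{p+1}$, one ascent is lost and the weakly decreasing tail of the shortened sequence extends backward through the maximal decreasing run ending at position $p-1$, producing an element of $B_{n-1,m-1,r-1,j}$ with $j\geq \ell+1$; summing over $j$ gives the second summand. The inverse is insertion of a $0$ at position $n-\ell$ of the shorter sequence, and one checks that no $0012$ pattern can be created because the portion of the new sequence past the insertion sits inside a weakly decreasing block. The main subtlety here is the case analysis together with the statistic bookkeeping.

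For \eqref{l1e1}, where $r=1$ forces $x_2=1$, I send $x$ to a pair $(y',j)$, where $j$ is the number of trailing $1$'s of $x$ and $y'$ is obtained by deleting $x_1$, subtracting $1$ from each remaining letter (producing an ascent sequence $y$ of length $n-1$ ending in $j$ zeros), and then removing those $j$ trailing zeros. Writing $i$ for the total number of $1$'s of $x$, a direct check shows $y'\in B_{n-j-1,\,m-1,\,i-j,\,\ell-j}$: the ascent at position $1$ of $x$ is destroyed by the shift, the $1$'s of $x$ become the zeros of $y$ (so $y$ has $i$ zeros and $y'$ has $i-j$), $0012$-avoidance is preserved since both shifting and truncating respect all order relations, and $\fwd$ drops by exactly $j$ because the trailing $1$'s of $x$ lie inside its weakly decreasing tail. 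The bound $j\leq \ell-1$ in the inner sum holds because the letter preceding the run of trailing $1$'s must be at least $2$: it is nonzero (as $r=1$) and it is not $1$ (by maximality of $j$), with the degenerate sequence $01^{n-1}$ ruled out by $m\geq 2$; hence the tail of $x$ extends strictly past the trailing block. The inverse map---append $j$ zeros, shift by $+1$, prepend $0$---recovers $x$, and summing over admissible $(i,j)$ reproduces \eqref{l1e1}. I expect the main obstacle to be handling the four statistics simultaneously and pinning down the correct summation ranges.
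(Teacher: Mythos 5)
Your proof is correct and takes essentially the same approach as the paper's: the singleton form $0^r1^{n-r}$ for \eqref{l1e3}, the ``delete the initial $0$, shift down, and strip trailing zeros'' decomposition for \eqref{l1e1}, and for \eqref{l1e2} the key observation that $0012$-avoidance forces the rightmost zero to sit immediately before the final weakly decreasing run, after which one deletes/inserts that zero and splits on whether an ascent is lost. The statistic bookkeeping and summation bounds you give match the paper's argument, so no further changes are needed.
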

\begin{proof}
The first statement is clear from the definitions. If $m=1$, then the set $B_{n,1,r,\ell}$ is either empty or is a singleton consisting of a sequence of the form $0^r1^{n-r}$, $1 \leq r \leq n-1$, which implies \eqref{l1e3}.   If $m \geq 2$ and $r=1$, then $\pi \in B_{n,m,1,\ell}$ must be of the form $\pi=1\pi'$, where $\pi' \in A_{n-1,m-1,i,\ell}$ for some $1 \leq i \leq n-m$.  For each $i$, note that
$$|A_{n-1,m-1,i,\ell}|=\sum_{j=0}^t b_{n-j-1,m-1,i-j,\ell-j},$$
upon conditioning on the number of trailing zeros $j$ within a member of $A_{n-1,m-1,i,\ell}$.  Summing over $i$ gives \eqref{l1e1}.

For \eqref{l1e2}, we condition on the position of the right-most zero.  First observe that within $\pi \in B_{n,m,r,\ell}$, where $r \geq 2$, the right-most zero must directly precede the first letter in the final weakly decreasing sequence, i.e., it is the lower number in the right-most ascent.  (For if not, then there would be an occurrence of $0012$, with the ``$1$'' and ``$2$'' corresponding to the letters in the right-most ascent.)  One may then obtain a particular member of $B_{n,m,r,\ell}$ by inserting a zero directly before the $\ell$-th letter from the right within some member of $B_{n-1,m,r-1,\ell}$ or by inserting a zero just before the $j$-th letter from the right within some member of $B_{n-1,m-1,r-1,j}$ for some $j \in \{\ell+1,\ell+2,\ldots,n-m\}$.  Note that no additional ascent is created in the former case, while in the latter, an ascent is introduced since a zero has been inserted between two numbers $a$ and $b$, where $a \geq b \geq 1$.  Summing over $j$ g
 ives \eqref{l1e2}.
\end{proof}

If $n \geq 2$ and $1 \leq m \leq n-1$, then let
$$B_{n,m,r}(u)=\sum_{r=1}^{n-m} b_{n,m,r,\ell}u^\ell, \qquad 1 \leq r \leq n-m.$$
Let $$B_{n,m}(u,v)=\sum_{r=1}^{n-m}B_{n,m,r}(u)v^r, \qquad 1 \leq m \leq n-1.$$

The polynomials $B_{n,m}(u,v)$ satisfy the following recurrence.

\begin{lemma}\label{l2}
If $n \geq 3$ and $2 \leq m \leq n-1$, then
\begin{align}
B_{n,m}(u,v)&=vB_{n-1,m}(u,v)+\frac{v}{1-u}(uB_{n-1,m-1}(1,v)-B_{n-1,m-1}(u,v))\notag\\
&~~+v\sum_{j=0}^{n-m-1}u^jB_{n-j-1,m-1}(u,1)\label{l2e1},
\end{align}
with
\begin{align}
B_{n,1}(u,v)&=\frac{uv(u^{n-1}-v^{n-1})}{u-v}.\label{l2e2}
\end{align}
\end{lemma}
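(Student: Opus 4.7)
The plan is to translate the recurrences in Lemma~\ref{l1} into generating-function identities by multiplying through by $u^\ell v^r$ and summing over the admissible indices. The base case \eqref{l2e2} follows at once from \eqref{l1e3}: since $b_{n,1,r,\ell}$ equals $1$ when $r+\ell=n$ and is $0$ otherwise, we have $B_{n,1}(u,v) = \sum_{r=1}^{n-1} u^{n-r} v^r$, which sums to the stated closed form.

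For \eqref{l2e1}, I would split $B_{n,m}(u,v) = vB_{n,m,1}(u) + \sum_{r \geq 2} v^r B_{n,m,r}(u)$ and handle the two pieces using \eqref{l1e1} and \eqref{l1e2}, respectively. Substituting \eqref{l1e2} into the $r \geq 2$ sum and shifting $r \mapsto r+1$, the first summand immediately produces $vB_{n-1,m}(u,v)$; the second, after interchanging the $\ell$- and $j$-sums and using $\sum_{\ell=1}^{j-1} u^\ell = (u-u^j)/(1-u)$, collapses to $\frac{v}{1-u}\bigl(uB_{n-1,m-1}(1,v)-B_{n-1,m-1}(u,v)\bigr)$. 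For the $r=1$ part, I would rewrite the constraint $0 \leq j \leq \min(i-1,\ell-1)$ in \eqref{l1e1} as the pair of conditions $j \leq i-1$ and $\ell \geq j+1$, move $j$ outside, and make the substitutions $\ell' = \ell - j$ and $i' = i - j$. The $\ell'$-sum then collects to $B_{n-j-1,m-1,i'}(u)$, and the $i'$-sum from $1$ to $n-m-j$ assembles to $B_{n-j-1,m-1}(u,1)$, yielding exactly $v \sum_{j=0}^{n-m-1} u^j B_{n-j-1,m-1}(u,1)$. Adding the three contributions produces \eqref{l2e1}.

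The main obstacle is a subtle matching of summation ranges: after reindexing $r \mapsto r+1$ in the $r \geq 2$ piece, the new index runs only from $1$ to $n-m-1$, yet the claimed right-hand side features the full polynomial $B_{n-1,m-1}(u,v)$, whose support in the zero count extends up to $n-m$. To close this gap one must verify that the $r' = n-m$ contribution to $uB_{n-1,m-1}(1,v) - B_{n-1,m-1}(u,v)$ is identically zero. This follows from a short combinatorial observation: when the zero count in a length-$(n-1)$ ascent sequence with $m-1$ ascents attains its maximum $n-m$, the non-zero letters number exactly $m-1$, so each of them must be preceded by a strictly smaller value; the sequence, which also ends in a non-zero letter, therefore has $\fwd$-value $\ell = 1$, forcing $B_{n-1,m-1,n-m}(u) = u \cdot b_{n-1,m-1,n-m,1}$ so that the offending difference cancels.
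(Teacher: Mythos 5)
Your proof is correct and follows essentially the same route as the paper's: it converts \eqref{l1e3}, \eqref{l1e2} and \eqref{l1e1} into identities for the polynomials $B_{n,m,r}(u)$, then multiplies by $v^r$ and sums over $r$, exactly as in the paper. The boundary verification you add — that the $r'=n-m$ contribution to $uB_{n-1,m-1}(1,v)-B_{n-1,m-1}(u,v)$ vanishes because in a member of $B_{n-1,m-1,n-m,\ell}$ every nonzero letter is an ascent top, forcing $\ell=1$ — is correct and closes a range-matching step that the paper's proof leaves implicit.
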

\begin{proof}
First observe that if $n \geq 2$ and $m=1$, then $B_{n,1,r}=u^{n-r}$ so that
$$B_{n,1}(u,v)=\sum_{r=1}^{n-1}u^{n-r}v^r=\frac{uv(u^{n-1}-v^{n-1})}{u-v}.$$
If $m \geq 2$ and $r \geq 2$, then we have, by Lemma \ref{l1},
\begin{align}
B_{n,m,r}(u)&=B_{n-1,m,r-1}(u)+\sum_{\ell=1}^{n-m}u^\ell \sum_{j=\ell+1}^{n-m}b_{n-1,m-1,r-1,j}\notag\\
&=B_{n-1,m,r-1}(u)+\sum_{j=2}^{n-m}b_{n-1,m-1,r-1,j}\sum_{\ell=1}^{j-1}u^\ell\notag\\
&=B_{n-1,m,r-1}(u)+\frac{1}{1-u}\sum_{j=2}^{n-m}b_{n-1,m-1,r-1,j}(u-u^j)\notag\\
&=B_{n-1,m,r-1}(u)+\frac{1}{1-u}\left(uB_{n-1,m-1,r-1}(1)-ub_{n-1,m-1,r-1,1}\right)\notag\\
&~~-\frac{1}{1-u}\left(B_{n-1,m-1,r-1}(u)-ub_{n-1,m-1,r-1,1}\right)\notag\\
&=B_{n-1,m,r-1}(u)+\frac{1}{1-u}\left(uB_{n-1,m-1,r-1}(1)-B_{n-1,m-1,r-1}(u)\right),\label{l2e3}
\end{align}
with
\begin{align}
B_{n,m,1}(u)&=\sum_{\ell=1}^{n-m}u^\ell\sum_{i=1}^{n-m}\sum_{j=0}^{\ell-1}b_{n-j-1,m-1,i-j,\ell-j}\notag\\
&=\sum_{i=1}^{n-m}~\sum_{j=0}^{n-m-1}u^j \sum_{\ell=j+1}^{n-m}b_{n-j-1,m-1,i-j,\ell-j}u^{\ell-j}\notag\\
&=\sum_{i=1}^{n-m}~\sum_{j=0}^{n-m-1}u^jB_{n-j-1,m-1,i-j}(u)\notag\\
&=\sum_{j=0}^{n-m-1}u^j\sum_{i=j+1}^{n-m}B_{n-j-1,m-1,i-j}(u).\label{l2e4}
\end{align}
Multiplying \eqref{l2e3} by $v^r$, summing over $2 \leq r \leq n-m$, and adding $v$ times equation \eqref{l2e4}
gives
\begin{align*}
B_{n,m}(u,v)&=vB_{n-1,m}(u,v)+\frac{v}{1-u}(uB_{n-1,m-1}(1,v)-B_{n-1,m-1}(u,v))\\
&~~+v\sum_{j=0}^{n-m-1}u^jB_{n-j-1,m-1}(u,1),
\end{align*}
which completes the proof.
\end{proof}

If $n \geq 2$, then let $$B_n(y;u,v)=\sum_{m=1}^{n-1} B_{n,m}(u,v)y^m.$$  Let
$$g(x,y;u,v)=\sum_{n\geq 2}B_n(y;u,v)x^n$$
denote the generating function for the sequence $B_n(y;u,v)$.  Then $g$ satisfies the following functional equation.

\begin{lemma}\label{l3}
We have
\begin{equation}\label{l3e1}
\left(1-vx+\frac{vxy}{1-u}\right)g(x,y;u,v)=\frac{uvx^2y}{1-ux}+\frac{uvxy}{1-u}g(x,y;1,v)+\frac{vxy}{1-ux}g(x,y;u,1).
\end{equation}
\end{lemma}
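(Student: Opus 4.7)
The plan is to multiply the recurrence \eqref{l2e1} from Lemma \ref{l2} by $x^n y^m$ and sum over $n \geq 3$ and $2 \leq m \leq n-1$. Writing $g(x,y;u,v) = G_1(x,y;u,v) + G_{\geq 2}(x,y;u,v)$, where $G_1 = y\sum_{n \geq 2} B_{n,1}(u,v) x^n$ collects the $m=1$ terms, the left-hand side of the summed recurrence is exactly $G_{\geq 2}$. I would then dispatch the three terms on the right-hand side individually.

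The first term, $vB_{n-1,m}(u,v)$, becomes $vx \cdot G_{\geq 2}(x,y;u,v)$ after the index shift $n \mapsto n-1$; note that the shifted $m=n-1$ slot vanishes since $B_{n-1,n-1}(u,v)=0$. The second term, after the substitution $n' = n-1$, $m' = m-1$, recovers the full $g$'s in both arguments and gives $\frac{vxy}{1-u}\bigl(u g(x,y;1,v) - g(x,y;u,v)\bigr)$. The third term is the only one requiring real work: for the convolution $v\sum_{j=0}^{n-m-1} u^j B_{n-j-1,m-1}(u,1)$, I set $i = n-j-1$, swap the order of summation to sum over $i \geq 2$ and $n \geq i+1$ on the outside, and evaluate the inner geometric series $\sum_{n \geq i+1}(ux)^{n-i-1} x^{i+1} = \frac{x^{i+1}}{1-ux}$. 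After relabeling $m-1 \to l$ this collapses to $\frac{vxy}{1-ux} g(x,y;u,1)$.

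The remaining bookkeeping is to handle $G_1$ using the closed form \eqref{l2e2}. A direct partial-fraction computation gives
\begin{equation*}
G_1(x,y;u,v) = \frac{uvy}{u-v}\left(\frac{ux^2}{1-ux} - \frac{vx^2}{1-vx}\right) = \frac{uvx^2 y}{(1-ux)(1-vx)},
\end{equation*}
so that $(1-vx)G_1 = \frac{uvx^2 y}{1-ux}$, which is precisely the affine term $\frac{uvx^2y}{1-ux}$ in \eqref{l3e1}. Combining all four pieces, the equation $G_{\geq 2}$ satisfies rewrites as $g - G_1$ times $(1-vx)$ on one side, plus the $\frac{vxy}{1-u} g$ term, against the sources on the other; after moving the $g(x,y;u,v)$ terms to the left and collecting, exactly \eqref{l3e1} emerges.

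I expect the only real obstacle to be the convolution reindexing described above and the careful accounting of the boundary row $m=1$; the rest is a mechanical translation of \eqref{l2e1} into generating-function language.
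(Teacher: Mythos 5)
Your proposal is correct and follows essentially the same route as the paper: a mechanical generating-function translation of Lemma \ref{l2}, with the $m=1$ boundary handled via the closed form \eqref{l2e2} and the convolution term summed as a geometric series in $ux$. The only difference is bookkeeping order — the paper first sums over $m$ to get an intermediate recurrence for $B_n(y;u,v)$ (where your identity $(1-vx)G_1=\frac{uvx^2y}{1-ux}$ appears coefficientwise as $u^{n-1}vy$) and then sums over $n$, while you sum over both indices at once.
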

\begin{proof}
If $n \geq 3$, then by \eqref{l2e1} and \eqref{l2e2}, we have
\begin{align}
B_n(y;u,v)&=\frac{uvy(u^{n-1}-v^{n-1})}{u-v}+v\left(B_{n-1}(y;u,v)-\frac{uvy(u^{n-2}-v^{n-2})}{u-v}\right)\notag\\
&~~+\frac{vy}{1-u}(uB_{n-1}(y;1,v)-B_{n-1}(y;u,v))+vy\sum_{j=0}^{n-3}u^j\sum_{m=2}^{n-j-1}B_{n-j-1,m-1}(u,1)y^{m-1}\notag\\
&=u^{n-1}vy+vB_{n-1}(y;u,v)+\frac{vy}{1-u}(uB_{n-1}(y;1,v)-B_{n-1}(y;u,v))\notag\\
&~~+vy\sum_{j=0}^{n-3}u^{n-j-3}B_{j+2}(y;u,1).\label{l3e2}
\end{align}
Since $B_2(y;u,v)=uvy$, equation \eqref{l3e2} is also seen to hold when $n=2$, provided we define $B_1(y;u,v)=0$.
Multiplying \eqref{l3e2} by $x^n$, and summing over $n \geq 2$, implies
\begin{align*}
g(x,y;u,v)&=\sum_{n\geq 2}u^{n-1}vx^{n}y+vxg(x,y;u,v)+\frac{vxy}{1-u}\left(ug(x,y;1,v)-g(x,y;u,v)\right)\\
&~~+vy\sum_{j \geq 0}\frac{B_{j+2}(y;u,1)}{u^{j+3}}\sum_{n \geq j+3}(ux)^n\\
&=\frac{uvx^2y}{1-ux}+vxg(x,y;u,v)+\frac{vxy}{1-u}\left(ug(x,y;1,v)-g(x,y;u,v)\right)\\
&~~+\frac{vxy}{1-ux}g(x,y;u,1),
\end{align*}
which yields \eqref{l3e1}.
\end{proof}

\begin{theorem}\label{t1}
We have
\begin{equation}\label{t1e1}
g(x,y;u,1)=g(x,y;1,u)=\frac{uxy(1-ux)\kappa-u^2x^2y}{(1-u)(1-ux)+uxy},
\end{equation}
where $\kappa=\kappa(x,y)$ is given by
\begin{equation}\label{t1e1a}
\kappa=\frac{1-x(y+1)-\sqrt{(1-x(y+1))^2-4x^2y}}{2xy}.
\end{equation}
\end{theorem}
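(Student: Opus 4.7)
The plan is to apply the kernel method to the functional equation \eqref{l3e1} in two stages.

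First I would substitute $v = 1$ into \eqref{l3e1}. After clearing the factor $1/(1-u)$ from the left-hand side and collecting terms, a short manipulation brings it to the form
$$(1-x)\,K(u)\,g(x,y;u,1) \;=\; ux^2y(1-u) + uxy(1-ux)\,R,$$
where $R := g(x,y;1,1)$ and $K(u) := (1-u)(1-ux) + uxy = xu^2 - (1+x-xy)u + 1$. A direct computation shows that the discriminant of $K$ in $u$ equals $(1-x(y+1))^2 - 4x^2y$, which is precisely the expression under the square root in \eqref{t1e1a}; consequently the roots of $K$ are $u_\pm = 1 + y\kappa_\pm$, where $\kappa_- = \kappa$ is the formal power-series branch of the quadratic $xy\kappa^2 - (1-x-xy)\kappa + x = 0$ implied by \eqref{t1e1a}, and $\kappa_+$ is the other branch. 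Only $u_-$ is itself a formal power series in $x$, so substituting $u = u_-$ into the displayed equation is legitimate. The left-hand side then vanishes, and solving the right-hand side for $R$ yields $R = xy\kappa/(1-x-xy\kappa)$, which the quadratic above lets one rewrite cleanly as $R = (1-x)\kappa - x$. Plugging $R$ back into the boxed equation and simplifying gives the stated formula for $g(x,y;u,1)$.

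To obtain $g(x,y;1,v)$, I would apply the kernel method a second time, now directly to \eqref{l3e1}, by choosing $u$ so as to kill the coefficient of $g(x,y;u,v)$ on the left. The equation $1 - vx + vxy/(1-u) = 0$ has the unique formal-power-series-in-$x$ root $u_0(v) = 1 + vxy/(1-vx)$. Substituting $u = u_0(v)$ and dividing through by $vxy$ reduces \eqref{l3e1} to
$$0 \;=\; (1-u_0)u_0 x + u_0(1-u_0 x)\,g(x,y;1,v) + (1-u_0)\,g(x,y;u_0,1),$$
which expresses $g(x,y;1,v)$ as a rational function of $g(x,y;u_0,1)$, already known from the first stage. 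Inserting that formula and simplifying, two identities carry the argument: $1 - u_0 = -vxy/(1-vx)$, and the key observation $u_0 - v = K(v)/(1-vx)$ with $K(v) = (1-v)(1-vx) + vxy$ the same kernel polynomial now read in the variable $v$. The latter converts the denominator $(1-u_0)(1-u_0x) + u_0 xy$ sitting inside $g(x,y;u_0,1)$ into $xyK(v)/(1-vx)^2$, and a further factoring reveals that $1 - u_0 x$ divides the numerator of $u_0 x + g(x,y;u_0,1)$. After cancellation, what remains is precisely the right-hand side of \eqref{t1e1} with $u$ replaced by $v$, establishing $g(x,y;1,v) = g(x,y;v,1)$ and finishing the theorem.

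The main obstacle is this final algebraic simplification. The intermediate expressions nest $\kappa$ (algebraic of degree two over $\mathbb{Q}(x,y)$) inside a rational function of $u_0(v)$, which is itself a rational function of $v$, so the formulas look opaque until one isolates the two identities above and spots the hidden factor $1-u_0 x$ in the numerator. Once these simplifications are in hand, the cancellation is clean and the desired equidistribution $g(x,y;1,u) = g(x,y;u,1)$ drops out.
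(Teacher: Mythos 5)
Your proposal is correct and follows essentially the same route as the paper: the kernel method applied to \eqref{l3e1} first with $v=1$ (your root $u_-=1+y\kappa$ is exactly the paper's $u_o$, giving $g(x,y;1,1)=(1-x)\kappa-x$ and then $g(x,y;u,1)$), and then a second kernel substitution along the same vanishing curve, your $u_0(v)=1+\tfrac{vxy}{1-vx}$ being precisely the paper's reparametrization $u=\tfrac{1-wx(1-y)}{1-wx}$ of its $v_o$. The only differences are cosmetic: you parametrize the kernel curve by $v$ directly and spell out the simplification identities that the paper leaves as ``several algebraic steps.''
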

\begin{proof}
Letting $v=1$ in \eqref{l3e1} implies
\begin{equation}\label{t1e2}
\left(1-x+\frac{xy}{1-u}-\frac{xy}{1-ux}\right)g(x,y;u,1)=\frac{ux^2y}{1-ux}+\frac{uxy}{1-u}g(x,y;1,1).
\end{equation}
To solve \eqref{t1e2}, we will use the \emph{kernel method} (see \cite{BBD}).  Let $u_o=u_o(x,y)$ satisfy
$$1-x+\frac{xy}{1-u_o}-\frac{xy}{1-u_ox}=0,$$
i.e.,
$$u_o=\frac{1+x(1-y)-\sqrt{(1+x(1-y))^2-4x}}{2x}.$$
Letting $u=u_o$ in \eqref{t1e2}, and solving for $g(x,y;1,1)$, then gives
$$g(x,y;1,1)=-\frac{x(1-u_o)}{1-u_ox}=(1-x)\kappa-x,$$
where the second equality follows from comparing $x(u_o-1)$ with $(1-u_ox)((1-x)\kappa-x)$ after simplifying.  (Note that there were two possible values for $u_o$ and our choice was dictated by the condition $g(0,y;1,1)=0$.)
Thus,
\begin{align*}
g(x,y;u,1)&=\frac{ux^2y(1-u)+uxy(1-ux)g(x,y;1,1)}{(1-x)((1-u)(1-ux)+uxy)}\\
&=\frac{ux^2y(1-u)+uxy(1-ux)((1-x)\kappa-x)}{(1-x)((1-u)(1-ux)+uxy)}\\
&=\frac{uxy(1-ux)\kappa-u^2x^2y}{(1-u)(1-ux)+uxy},
\end{align*}
which gives half of \eqref{t1e1}.

To find an expression for $g(x,y;1,u)$, we again use the kernel method.  Let $v_o=v_o(x,y,u)$ satisfy
$$1-v_ox+\frac{v_oxy}{1-u}=0,$$
i.e., $v_o=\frac{1-u}{x(1-u-y)}$.
Substituting $v=v_o$ in \eqref{l3e1} implies
\begin{equation}\label{t1e3}
g(x,y;1,v_o)=\frac{u-1}{uv_oxy}\left(\frac{uv_ox^2y}{1-ux}+\frac{v_oxy}{1-ux}g(x,y;u,1)\right).
\end{equation}
Letting $v_o=w$ in \eqref{t1e3} then gives
\begin{equation}\label{t1e4}
g(x,y;1,w)=\frac{u-1}{uwxy}\left(\frac{uwx^2y}{1-ux}+\frac{wxy}{1-ux}g(x,y;u,1)\right),
\end{equation}
where $u=\frac{1-wx(1-y)}{1-wx}$.  Substituting into \eqref{t1e4} the expression determined above for $g(x,y;u,1)$, and simplifying, implies after several algebraic steps,
$$g(x,y;1,w)=\frac{wxy(1-wx)\kappa-w^2x^2y}{(1-w)(1-wx)+wxy},$$
which completes the proof.
\end{proof}

Note that the full expression for $g(x,y;u,v)$ can now be recovered from \eqref{t1e1} and \eqref{l3e1}.  Let $\mathcal{B}_n$ denote the subset of $\mathcal{S}_{0012}(n)$ whose members do not end in $0$.  Taking $u=y=1$ in \eqref{t1e1} shows that there are $C_{n}-C_{n-1}$ members of $\mathcal{B}_n$ if $n \geq 1$ and thus $C_n$ members of $\mathcal{S}_{0012}(n)$ altogether. Let $f(x,y;u)$ be the generating function counting the members of $\mathcal{S}_{0012}(n)$ according to the number of ascents and the length of the final weakly decreasing sequence.

\begin{corollary}\label{c1}
The bistatistics (\asc,\fwd) and (\asc,\zero) have the same distribution on $\mathcal{S}_{0012}(n)$.  Furthermore, the common generating function $f(x,y;u)$ has explicit formula
\begin{equation}\label{c1e1}
f(x,y;u)=\frac{1}{1-ux}+\frac{1}{1-ux}g(x,y;1,u),
\end{equation}
where $g(x,y;1,u)$ is given by \eqref{t1e1}.
\end{corollary}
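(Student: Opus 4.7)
The plan is to deduce both claims of the corollary from the symmetry $g(x,y;u,1)=g(x,y;1,u)$ supplied by Theorem \ref{t1}, after lifting the equidistribution from $\mathcal{B}_n$ (the non-zero-terminated members of $\mathcal{S}_{0012}(n)$) to the full set $\mathcal{S}_{0012}(n)$ by means of a straightforward trailing-zero decomposition.

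For the decomposition, observe that every $\pi\in\mathcal{S}_{0012}(n)$ factors uniquely as $\pi=\pi'\cdot 0^{k}$, where $k\ge 0$ and either $\pi'$ is empty (so $\pi=0^{n}$) or $\pi'\in\mathcal{B}_{n-k}$. Appending zeros to a member of $\mathcal{B}$ cannot create an occurrence of $0012$, since the ``$2$'' in any such occurrence must be strictly positive; conversely the prefix $\pi'$ inherits $0012$-avoidance from $\pi$. Under this factorization, the trailing block $0^{k}$ extends the final weakly decreasing run by $k$ and contributes $k$ new zeros without altering the ascent count, so $\asc(\pi)=\asc(\pi')$, $\fwd(\pi)=\fwd(\pi')+k$, and $\zero(\pi)=\zero(\pi')+k$ (with $\pi'$ empty understood as contributing $0$ to each statistic).

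Summing over $k\ge 0$ multiplies the contribution of $\pi'\in\mathcal{B}$ by a factor of $\sum_{k\ge 0}(ux)^{k}=(1-ux)^{-1}$, regardless of whether $u$ is marking $\fwd$ or $\zero$, while the all-zero sequences (including the empty word) likewise sum to $(1-ux)^{-1}$. Collecting terms gives
$$f(x,y;u)=\frac{1}{1-ux}+\frac{g(x,y;u,1)}{1-ux},$$
and the corresponding generating function for $(\asc,\zero)$ on $\mathcal{S}_{0012}(n)$ is the same expression with $g(x,y;u,1)$ replaced by $g(x,y;1,u)$. The identity $g(x,y;u,1)=g(x,y;1,u)$ of Theorem \ref{t1} then produces both the $(\asc,\fwd)\sim(\asc,\zero)$ equidistribution and the closed form \eqref{c1e1} at a single stroke. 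The only substantive input is Theorem \ref{t1} itself; the lifting is routine once one notes that $0012$-avoidance is preserved under appending trailing zeros, so no real obstacle remains.
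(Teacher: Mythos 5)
Your proposal is correct and follows essentially the same route as the paper: decompose each member of $\mathcal{S}_{0012}(n)$ as a member of $\mathcal{B}_{n-k}$ (or the empty word) followed by $k$ trailing zeros, observe that this shifts $\fwd$ and $\zero$ equally while fixing $\asc$, and invoke the identity $g(x,y;u,1)=g(x,y;1,u)$ from Theorem \ref{t1} to get both the equidistribution and formula \eqref{c1e1}. The only difference is cosmetic: you make explicit the (easy) check that appending zeros preserves $0012$-avoidance, which the paper leaves implicit.
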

\begin{proof}
Theorem \ref{t1} implies that the bistatistics (asc,fwd) and (asc,zeros) are equally distributed on $\mathcal{B}_n$ for all $n$.  Since adding an arbitrary number of trailing zeros to a member of $\mathcal{B}_n$ preserves the number of ascents while increasing the length of the final weakly decreasing sequence and the number of zeros by the same amount, it follows that (asc,fwd) and (asc,zeros) are also equally distributed on $\mathcal{S}_{0012}(n)$ for all $n$.  Furthermore, note that a member of $\mathcal{S}_{0012}(n)$ having at least one ascent may be obtained by adding $i$ zeros for some $i$ to the end of some member of $\mathcal{B}_{n-i}$.  Each added zero increases the $\fwd$ value by one, which justifies the $\frac{1}{1-ux}$ factor in the second term on the right-hand side of \eqref{c1e1}.  The $\frac{1}{1-ux}$ term  counts all ascent sequences having no ascents, i.e., those of the form $0^n$ for some $n \geq 0$.
\end{proof}

The following result answers the remaining part of Conjecture 3.2 above in the affirmative.

\begin{theorem}\label{t2}
The bistatistic $(\asc,\fwd)$ on $\mathcal{S}_{0012}(n)$ has the same distribution as $(asc,RLmax)$ on the set of $132$-avoiding permutations of length $n$.  In particular, the number of ascents has the Narayana distribution on $\mathcal{S}_{0012}(n)$.
\end{theorem}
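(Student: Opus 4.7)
The plan is to match the generating function $f(x,y;u)$ from Corollary \ref{c1} against the corresponding generating function
\[
P(x,y,u)=\sum_{n\geq 0}x^n\sum_{\pi\in S_n(132)}y^{\asc(\pi)}u^{\RLm(\pi)}
\]
for $(\asc,\RLm)$ on $132$-avoiding permutations. Once equality of these two generating functions is established, the first assertion is immediate, and the Narayana distribution of $\asc$ on $\mathcal{S}_{0012}(n)$ follows by specializing $u=1$ and invoking the fact recalled in the introduction that the number of $\pi\in S_n(132)$ with $k$ ascents equals $N_{n,k+1}$.

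To produce $P(x,y,u)$, I would use the classical decomposition of $\pi\in S_n(132)$ at the position $k$ of its maximum entry $n$: the prefix $\pi_{\mathrm{b}}$ is a $132$-avoiding permutation on the top $k-1$ values, while the suffix $\pi_{\mathrm{a}}$ is a $132$-avoiding permutation of $\{1,\ldots,n-k\}$. A direct check yields
\[
\asc(\pi)=\asc(\pi_{\mathrm{b}})+\asc(\pi_{\mathrm{a}})+[k>1]\qquad\text{and}\qquad\RLm(\pi)=1+\RLm(\pi_{\mathrm{a}}),
\]
the latter because every entry of $\pi_{\mathrm{b}}$ is dominated by $n$ lying to its right. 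Letting $A(x,y)$ denote the ascent generating function on $132$-avoiding permutations, summing over $k\geq 1$ gives
\[
P(x,y,u)=\frac{1}{1-xu\bigl(1+y(A(x,y)-1)\bigr)}.
\]
The same decomposition at $u=1$ shows that $A$ satisfies $xyA^2-(1-x+xy)A+1=0$, which identifies $A(x,y)=1+\kappa(x,y)$ for $\kappa$ as in \eqref{t1e1a}; hence $P(x,y,u)=\bigl(1-xu(1+y\kappa)\bigr)^{-1}$.

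It then remains to verify $f(x,y;u)=P(x,y,u)$. Substituting \eqref{t1e1} into \eqref{c1e1} and exploiting the factorization
\[
(1-u)(1-ux)+uxy-u^2x^2y=(1-ux)(1-u+uxy)
\]
reduces $f(x,y;u)$ to the closed form $\bigl(1-u+uxy(1+\kappa)\bigr)/\bigl((1-u)(1-ux)+uxy\bigr)$. Cross-multiplying against $P(x,y,u)^{-1}=1-xu(1+y\kappa)$ and expanding yields a polynomial identity in $\kappa$ whose $\kappa^2$ coefficient is eliminated by the quadratic relation $xy\kappa^2=(1-x(1+y))\kappa-x$, after which the remaining linear and constant terms cancel. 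This final simplification is the only nontrivial step: the combinatorial content is already packaged in Corollary \ref{c1}, and the decomposition of $S_n(132)$ is standard, so the main obstacle is the purely algebraic reduction of the two explicit rational expressions modulo the quadratic satisfied by $\kappa$.
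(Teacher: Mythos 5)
Your proposal is correct and follows essentially the same route as the paper: it computes the generating function for $(\asc,\RLm)$ on $132$-avoiding permutations via the standard decomposition at the maximum (the paper phrases this as whether $n$ is the first letter), identifies it as $\bigl(1-ux-uxy\kappa\bigr)^{-1}$, and then verifies equality with $f(x,y;u)$ from Corollary \ref{c1} by algebraic reduction using the quadratic relation $xy\kappa^2=(1-x(y+1))\kappa-x$. The only cosmetic difference is your explicit factorization $(1-u)(1-ux)+uxy-u^2x^2y=(1-ux)(1-u+uxy)$, which the paper carries out implicitly in deriving \eqref{t2e4}.
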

\begin{proof}
The second statement is an immediate consequence of the first since it is well known that $\asc$ has the Narayana distribution on $132$-avoiding permutations.  To show the first statement, let $h(x,y;u)$ denote the generating function which counts the $132$-avoiding permutations of length $n$ according to the number of ascents and the number of right-to-left maxima.  We will show
\begin{equation}\label{t2e1}
h(x,y;u)=f(x,y;u).
\end{equation}
We first compute $h(x,y;u)$  Considering whether or not $n$ is the first letter of a non-empty $132$-avoiding permutation of length $n$ implies
\begin{equation}\label{t2e2}
h(x,y;u)=1+uxh(x,y;u)+uxy(h(x,y;1)-1)h(x,y;u),
\end{equation}
which gives
$$h(x,y;u)=\frac{1}{1-ux(1-y)-uxyh(x,y;1)}.$$
Taking $u=1$ in \eqref{t2e2} and solving for $h(x,y;1)$ implies $h(x,y;1)=\kappa+1$, where $\kappa=\kappa(x,y)$ is defined by \eqref{t1e1a} above, which gives
\begin{equation}\label{t2e3}
h(x,y;u)=\frac{1}{1-ux-uxy\kappa}.
\end{equation}

On the other hand, by \eqref{c1e1} and \eqref{t1e1}, we have
\begin{align}
f(x,y;u)&=\frac{1}{1-ux}+\frac{1}{1-ux}g(x,y;1,u)\notag\\
&=\frac{1}{1-ux}+\frac{1}{1-ux}\left(\frac{uxy(1-ux)\kappa-u^2x^2y}{(1-u)(1-ux)+uxy}\right)\notag\\
&=\frac{1-u+uxy(\kappa+1)}{(1-u)(1-ux)+uxy}.\label{t2e4}
\end{align}

Equality \eqref{t2e1} now follows from \eqref{t2e3} and \eqref{t2e4}, upon verifying
$$\frac{1}{1-ux-uxy\kappa}=\frac{1-u+uxy(\kappa+1)}{(1-u)(1-ux)+uxy},$$
which may be done by cross-multiplying, expanding both sides of the equation that results, and using the relation $xy\kappa^2=(1-x(y+1))\kappa-x$.
\end{proof}

\section{Other patterns}

In this section, we consider the problem of determining $A_\tau(n)$ in the cases when $\tau=1012$, $0123$, or $210$.  We will use the following additional notation.  If $n$ is a positive integer, then let $[n]=\{1,2,\ldots,n\}$, with $[0]=\varnothing$.  If $m$ and $n$ are positive integers, then let $[m,n]=\{m,m+1,\ldots,n\}$ if $m \leq n$, with $[m,n]=\varnothing$ if $m>n$.

\subsection{The case 1012}

Here, we enumerate the members of $\mathcal{S}_{1012}(n)$.  Recall that a sequence $\pi=\pi_1\pi_2\cdots \pi_n$ is said to be a \emph{restricted growth function} (RGF) if it satisfies (i) $\pi_1=1$ and (ii) $\pi_{i+1}\leq \max\{\pi_1,\pi_2,\ldots,\pi_i\}+1$ for all $i\in[n-1]$.  See, e.g., \cite{M} for details. By \cite[Lemma 2.4]{DS}, the set $\mathcal{S}_{1012}(n)$ consists solely of RGF sequences since $1012$ is a subpattern of $01012$.  So we consider the avoidance problem on RGF's, or, equivalently, on finite set partitions.

Recall that a \emph{partition} of $[n]$ is any collection of non-empty, pairwise disjoint subsets, called \emph{blocks}, whose union is $[n]$.  A partition $\Pi$ is said to be in \emph{standard form} if it is written as $\Pi=B_1/B_2/\cdots$, where $\min(B_1)<\min(B_2)<\cdots$.  One may also represent $\Pi$, equivalently, by the \emph{canonical sequential form} $\pi=\pi_1\pi_2\cdots \pi_n$, wherein $j \in B_{\pi_j}$ for each $j$; see, e.g., \cite{SW} for details.  For example, the partition $\Pi=1,3,6/2,4/5,8/7$ has canonical sequential form $\pi=12123143$.  Note that $\pi$ is a restricted growth function from $[n]$ onto $[k]$, where $k$ denotes the number of blocks of $\Pi$.  Below, we will represent partitions $\Pi$ by their canonical sequential forms $\pi$ and consider an avoidance problem on these words.   See, for example, the related papers \cite{CDD,JM,Sa} concerning the problem of pattern avoidance on set partitions.

Note than an RGF, equivalently, a set partition, avoids the pattern $1012$ if and only if it avoids $01012$.  In what follows, we will denote $01012$ by $12123$ to be consistent with the convention of RGF's starting with the letter $1$.  We now address the problem of avoiding $12123$.  Let $P_n$ denote the set of all partitions of $[n]$ and let $P_n(12123)$ consist of those members of $P_n$ that avoid the pattern $12123$ when represented canonically.

We refine the set $P_n(12123)$ as follows.  Given $n \geq 2$ and $1 \leq s < t \leq n$, let $A_{n,t,s}$ denote the subset of $P_n(12123)$ consisting of those partitions $\pi=\pi_1\pi_2\cdots\pi_n$ having at least two distinct letters in which the left-most occurrence of the largest letter is at position $t$ and the left-most occurrence of the second largest letter is at position $s$.  For example, $\pi=123324425215 \in A_{12,9,6}$ since the left-most occurrence of the largest letter, namely, $5$, is at position $9$ and the left-most occurrence of the second largest letter is at position $6$.  The array $a_{n,t,s}=|A_{n,t,s}|$ is determined by the following recurrence.

\begin{lemma}\label{l4}
The array $a_{n,t,s}$ can assume non-zero values only when $n \geq 2$ and $1 \leq s<t\leq n$.  It is determined by the recurrence
\begin{equation}\label{l4e1}
a_{n,t,s}=\sum_{j=t}^{n-1}a_{n-1,j,s}+\sum_{r=1}^{t-s}\sum_{i=1}^{s-1}a_{n-r,t-r,i}, \qquad n \geq 3 ~\text{and}~2\leq s<t \leq n,
\end{equation}
and the condition
\begin{equation}\label{l4e2}
a_{n,t,1}=2^{n-t}, \qquad n \geq 2 ~\text{and}~2 \leq t\leq n.
\end{equation}
\end{lemma}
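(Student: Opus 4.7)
The plan is to verify the non-zero range from the canonical-form constraints, handle the boundary condition directly, and establish the main recurrence by partitioning $A_{n,t,s}$ according to the multiplicity of the largest letter $M$ of $\pi$. The non-zero range is immediate since any member of $A_{n,t,s}$ has at least two distinct letters. For \eqref{l4e2}, the rule $\pi_1 = 1$ forces the second-largest letter of $\pi$ to equal $1$ when $s = 1$, making the largest equal to $2$; hence $\pi$ uses only $\{1,2\}$, with $\pi_i = 1$ for $i < t$, $\pi_t = 2$, and arbitrary $\pi_{t+1}, \ldots, \pi_n \in \{1,2\}$. Any such $\pi$ trivially avoids the three-letter pattern $12123$, yielding $2^{n-t}$.

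For \eqref{l4e1}, I would split $A_{n,t,s}$ into Class A (where $M$ occurs at some position $> t$) and Class B ($M$ occurs only at $t$). In Class A, let $t'$ be the second $M$-position; the map $\phi_A \colon \pi \mapsto \sigma$ that deletes position $t$ produces a canonical partition $\sigma \in A_{n-1, t'-1, s}$ avoiding $12123$, with inverse given by inserting $M$ at position $t$. As $t' - 1$ ranges over $[t, n-1]$, this accounts for the first sum.

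The core of the argument is Class B, where the prefix $\pi_1 \cdots \pi_{t-1}$ must avoid the pattern $1212$, since any such subpattern together with the unique $M$ at position $t$ would produce a $12123$ in $\pi$. I would then establish the key structural fact that the $M-1$ entries in this prefix occupy consecutive positions $s, s+1, \ldots, s+k-1$ for some $k \in [1, t-s]$: projecting the prefix onto any pair $\{a, M-1\}$ with $a < M-1$ shows that $1212$-avoidance forces no value $a < M-1$ to occur strictly between two $M-1$'s, and such an $a$ (e.g., the letter $1$, which appears at position $1 < s$) always exists. Setting $r = k$, the map $\phi_B \colon \pi \mapsto \sigma$ deletes the $r$ positions $s, \ldots, s+r-1$ and relabels the resulting $M$ (now at position $t - r$) as $M-1$. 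Verification then shows $\sigma \in A_{n-r, t-r, i}$ with $i \in [1, s-1]$ equal to the first position of $M-2$ in $\pi$: the relabeled $M$ becomes the first $M-1$ of $\sigma$ at position $t - r$ (since any $M-1$'s originally after $t$ in $\pi$ shift to positions exceeding $t - r$), while the first $M-2$ is preserved at a position strictly less than $s$. The inverse map, starting from $\sigma \in A_{n-r, t-r, i}$ with $r \in [1, t-s]$ and $i \in [1, s-1]$, inserts $r$ copies of the largest letter of $\sigma$ at positions $s, \ldots, s+r-1$ and promotes the first occurrence of that letter in $\sigma$ to a new largest letter $M$. Summing over $(r, i)$ yields the second sum.

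I expect the main obstacle to be the structural claim in Class B that the $M-1$'s in the prefix occupy consecutive positions, which requires the projection argument based on $1212$-avoidance; the verification that $\phi_B$ and its inverse preserve $12123$-avoidance is then a routine case analysis based on which role an inserted or relabeled letter could play in a hypothetical forbidden pattern.
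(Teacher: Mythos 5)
Your proposal is correct and follows essentially the same route as the paper: it splits $A_{n,t,s}$ according to whether the largest letter $M$ occurs more than once (delete the leftmost $M$, giving $\sum_{j=t}^{n-1}a_{n-1,j,s}$) or exactly once (delete the run of $(M-1)$'s starting at position $s$ and demote the $M$ at position $t$ to $M-1$, giving $\sum_{r}\sum_{i}a_{n-r,t-r,i}$), exactly as in the paper's proof. One small point of precision: in the consecutiveness argument for Class B, the letter $a$ lying between two $(M-1)$'s must itself occur before position $s$, which follows from the RGF property that first occurrences of $1,2,\ldots,M-1$ appear in increasing order, not merely from the letter $1$ appearing at position $1$.
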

\begin{proof}
Note that members of $A_{n,t,1}$ are of the form $1^{t-1}2\alpha$, where $\alpha$ is any word on the letters $\{1,2\}$, which implies \eqref{l4e2}.  To show \eqref{l4e1}, first suppose that $\pi=\pi_1\pi_2\cdots \pi_n \in A_{n,t,s}$, where $n \geq 3$ and $2 \leq s < t \leq n$.  Let us denote the largest letter of $\pi$ by $z$.  We first show that there are $\sum_{j=t}^{n-1}a_{n-1,j,s}$ members $\pi$ of $A_{n,t,s}$ in which $z$ occurs at least twice.  To do so, first note that if $\pi$ contains two or more letters $z$, then the $z$ at position $t$ (i.e., the left-most $z$) is extraneous concerning the avoidance of $12123$ since all letters coming to the left of it are also governed by a $z$ to the right of position $t$.  Thus, we may safely delete the $z$ at position $t$ and the left-most occurrence of $z$ in the resulting partition of $[n-1]$ is at position $j$ for some $j \in [t,n-1]$; note that the left-most position of the second largest letter remains unchanged.  Thus, de
 letion of the left-most $z$ defines a bijection between the subset of $A_{n,t,s}$ in which $z$ occurs at least twice and $\bigcup_{j=t}^{n-1}A_{n-1,j,s}$, which has cardinality $\sum_{j=t}^{n-1}a_{n-1,j,s}$.

So it remains to show that there are $\sum_{r=1}^{t-s}\sum_{i=1}^{s-1}a_{n-r,t-r,i}$ members $\pi$ of $A_{n,t,s}$ in which $z$ occurs once.  Suppose that the $z-1$ at position $s$ within such $\pi$ is the first letter in a run of $(z-1)$'s of length $r$.  Then $1 \leq r \leq t-s$ and no other $(z-1)$'s may occur between positions $s$ and $t$ without introducing an occurrence of $12123$ (note $s \geq 2$ implies $z \geq 3$).  Thus, we may delete the run of $(z-1)$'s starting at position $s$ since all letters to the left of position $s$ are also governed by the $z$ at position $t$. We then change the $z$ at position $t$ to a $z-1$.  For each $r$, this change of letter and deletion defines a bijection with $\bigcup_{i=1}^{s-1}A_{n-r,t-r,i}$, which has cardinality $\sum_{i=1}^{s-1}a_{n-r,t-r,i}$.  Summing over $r$ then implies \eqref{l4e1} and completes the proof.
\end{proof}

Our next result shows in the affirmative half of Conjecture 3.5 above.

\begin{theorem}\label{t3}
We have
\begin{equation}\label{t3e1}
A_{1012}(n)=\sum_{i=0}^{n-1}\binom{n-1}{i}C_i, \qquad n \geq 1,
\end{equation}
where $C_i$ denotes the $i$-th Catalan number.
\end{theorem}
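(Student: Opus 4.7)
The plan is to convert Lemma \ref{l4} into a multivariate functional equation, solve it via the kernel method (as in the proof of Theorem \ref{t1}), and then verify that the resulting generating function is the binomial transform of the Catalan generating function.

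First I would introduce a generating function that records the three parameters of $a_{n,t,s}$ via catalytic variables, for instance
$$F(x;y,z) \;=\; \sum_{n \geq 2}\;\sum_{1 \leq s < t \leq n} a_{n,t,s}\, x^n\, y^{t-s}\, z^{s},$$
the specific exponents being chosen so that the shifts appearing in \eqref{l4e1} yield manageable geometric series. Multiplying \eqref{l4e1} by $x^n y^{t-s} z^{s}$ and summing over the admissible ranges, I would translate the two sums on the right-hand side as follows. The first term $\sum_{j=t}^{n-1} a_{n-1,j,s}$, after swapping the order of summation, contributes a combination of $F(x;y,z)$ and $F(x;1,z)$ divided by $1-y$, reflecting the single downward shift in $n$ together with a partial sum in the index $t$. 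The second term $\sum_{r=1}^{t-s}\sum_{i=1}^{s-1} a_{n-r,t-r,i}$ involves \emph{simultaneous} shifts of $n$ and $t$ by the same amount $r$, so the $r$-sum collapses to a geometric series in $x$, and the inner $i$-sum contributes a combination of $F(x;y,z)$ and $F(x;y,1)$ divided by $1-z$. Finally, the boundary values from \eqref{l4e2} yield an explicit rational generating function after summing $\sum_{n\geq 2}\sum_{t=2}^n 2^{n-t} x^n y^{t-1} z$.

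Collecting terms would produce a functional equation of the shape
$$K(x,y,z)\, F(x;y,z) \;=\; R(x,y,z) + P(x,y,z)\, F(x;1,z) + Q(x,y,z)\, F(x;y,1),$$
with an explicit kernel $K$ and rational coefficients $R, P, Q$. Following the pattern of Theorem \ref{t1}, I would apply the kernel method twice in cascade: first, setting $K(x,y,z)=0$ and solving for $y = y_0(x,z)$ eliminates the unknown $F(x;y,z)$ and produces a linear relation between $F(x;1,z)$ and $F(x;y_0(x,z),1)$; then, a second specialization $z = z_0(x)$ in the relation that determines $F(x;y,1)$ pins down both boundary specializations in closed form.

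Finally, setting $y = z = 1$ in the resulting formula recovers $\sum_{n\geq 2}(A_{1012}(n)-1)\,x^n$, to which I would add the trivial contribution $\sum_{n\geq 1} x^n$ arising from the singleton-block partitions $1^n$ (the only members of $P_n(12123)$ not counted by the $a_{n,t,s}$). To complete the proof of \eqref{t3e1}, I would verify that the resulting ordinary generating function equals $\tfrac{x}{1-x}\, C\!\left(\tfrac{x}{1-x}\right)$, where $C(u) = \tfrac{1-\sqrt{1-4u}}{2u}$, since this rational substitution is exactly what encodes the binomial transform of $C_n$. The identity can be checked by clearing radicals and using the Catalan relation $u\,C(u)^2 = C(u) - 1$.

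The main obstacle is the cascaded kernel-method calculation: because \eqref{l4e1} shifts two indices at once, the functional equation carries \emph{two} unknown single-variable specializations, so one round of the kernel method is not sufficient and careful algebraic bookkeeping is required in the second round. The subsequent verification that the closed form collapses to $\tfrac{x}{1-x}\, C\!\left(\tfrac{x}{1-x}\right)$ should then be a routine application of the Catalan functional equation, analogous to the final step in the proof of Theorem \ref{t2}.
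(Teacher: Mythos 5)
Your overall architecture matches the paper's: translate the recurrence of Lemma \ref{l4} into a functional equation for a generating function with two catalytic variables, resolve it by the kernel method, and then check that the resulting series is $\frac{x}{1-x}C\bigl(\frac{x}{1-x}\bigr)$ after adding $\frac{x}{1-x}$ for the sequences $1^n$ (that last bookkeeping and the binomial-transform identification are fine). However, there is a genuine gap at the heart of the plan: the functional equation does not have the shape you assume, namely $K\,F(x;y,z)=R+P\,F(x;1,z)+Q\,F(x;y,1)$. The two partial sums in \eqref{l4e1} run \emph{between} the indices $s$ and $t$ (the first sums $t$ down to $s+1$, the second sums effectively over $s\leq t-r$), so the boundary terms of the geometric series couple the two exponents. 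Concretely, with your weights $x^ny^{t-s}z^s$ the second sum contributes
\begin{equation*}
\frac{xyz}{(1-xy)(y-z)}\bigl(F(x;y,z)-F(x;z,z)\bigr),
\end{equation*}
i.e.\ the \emph{diagonal} specialization $F(x;z,z)$ appears, not $F(x;y,1)$; with the paper's weights $u^{t-2}v^{s-1}$ one instead gets the composite-argument unknowns $A(t;1,uv)$ and $A(t;uv,1)$. In either case the equation is not of the standard ``two boundary unknowns'' type, so your proposed cascade does not apply; and even under your assumed shape, one kernel substitution $y=y_0(x,z)$ yields a single relation between two unknown functions of $(x,z)$, and a further point specialization $z=z_0(x)$ only gives information at one value of $z$, which cannot pin down $F(x;1,z)$ for all $z$.

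The missing idea, which is the crux of the paper's proof, is to exploit the composite arguments rather than fight them: with the weights $u^{t-2}v^{s-1}$ both unknown specializations occur at the product $uv$, so substituting $u=1/v$ (evaluating on the curve $uv=1$) collapses $A(t;1,uv)$ and $A(t;uv,1)$ to the single unknown $A(t;1,1)$. One is then left with an equation in $A(t;1/v,v)$ and $A(t;1,1)$ only, and a single application of the kernel method (choosing $v_0=\frac{1-t+\sqrt{1-6t+5t^2}}{2}$) determines $A(t;1,1)=\frac{1-3t-\sqrt{1-6t+5t^2}}{2(1-t)}$, from which the stated formula follows exactly as in your final step. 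Without this collapse (or some equivalent device for handling the coupled arguments), the kernel-method stage of your argument does not go through.
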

\begin{proof}
We determine a generating function for the sum $\sum_{t=2}^n\sum_{s=1}^{t-1}a_{n,t,s}$, where $n \geq 2$.  To do so, we first
define the polynomials $A_{n,t}(v)=\sum_{s=1}^{t-1}a_{n,t,s}v^{s-1}$ and $A_{n}(u,v)=\sum_{t=2}^nA_{n,t}(v)u^{t-2}$. Note that $A_{n,t}(0)=a_{n,t,1}=2^{n-t}$ for all $2 \leq t\leq n$, which implies $A_n(u,0)=\frac{2^{n-1}-u^{n-1}}{2-u}$. Multiplying \eqref{l4e1} by $u^{t-2}v^{s-1}$, and summing over $s=2,3,\ldots,t-1$ and $t=3,4,\ldots,n$, gives
\begin{align*}
A_{n}(u,v)-A_n(u,0)&=\sum_{t=3}^n\sum_{s=2}^{t-1}\sum_{j=t}^{n-1}a_{n-1,j,s}v^{s-1}u^{t-2}+\sum_{t=3}^n\sum_{s=2}^{t-1}\sum_{r=1}^{t-s}\sum_{i=1}^{s-1}a_{n-r,t-r,i}v^{s-1}u^{t-2}\\
&=\sum_{s=2}^{n-2}\sum_{j=s+1}^{n-1}\sum_{t=s+1}^ja_{n-1,j,s}v^{s-1}u^{t-2}+\sum_{s=2}^{n-1}\sum_{r=1}^{n-s}\sum_{t=s+r}^{n}\sum_{i=1}^{s-1}a_{n-r,t-r,i}v^{s-1}u^{t-2}\\
&=\sum_{s=2}^{n-2}\sum_{j=s+1}^{n-1}a_{n-1,j,s}v^{s-1}\left(\frac{u^{s-1}-u^{j-1}}{1-u}\right)\\
&~~+\sum_{s=2}^{n-1}\sum_{r=1}^{n-s}\sum_{t=s}^{n-r}\sum_{i=1}^{s-1}a_{n-r,t,i}v^{s-1}u^{r+t-2}\\
&=\sum_{j=3}^{n-1}\sum_{s=2}^{j-1}a_{n-1,j,s}v^{s-1}\left(\frac{u^{s-1}-u^{j-1}}{1-u}\right)\\
&~~+\sum_{r=1}^{n-2}\sum_{t=2}^{n-r}\sum_{i=1}^{t-1}a_{n-r,t,i}u^{t+r-2}\left(\frac{v^i-v^t}{1-v}\right)\\
\end{align*}
\begin{align*}
&\qquad~=\frac{1}{1-u}(A_{n-1}(1,uv)-uA_{n-1}(u,v)-A_{n-1}(1,0)+uA_{n-1}(u,0))\\
&\qquad~~~+\sum_{j=2}^{n-1}\frac{u^{n-j}}{1-v}(vA_j(u,v)-v^2A_j(uv,1)), \qquad n \geq 3,
\end{align*}
which implies
\begin{align*}
A_{n}(u,v)&=\frac{2^{n-1}-u^{n-1}}{2-u}+\frac{1}{1-u}(A_{n-1}(1,uv)-uA_{n-1}(u,v)-2^{n-2}+1+\frac{u}{2-u}(2^{n-2}-u^{n-2}))\\
&+\sum_{j=2}^{n-1}\frac{u^{n-j}}{1-v}(vA_j(u,v)-v^2A_j(uv,1)), \qquad n \geq 3,
\end{align*}
which is also seen to hold when $n=2$ upon taking $A_1(u,v)=0$.

Next define the generating function
$$A(t;u,v)=\sum_{n\geq1}A_n(u,v)t^n.$$
Multiplying the last recurrence relation by $t^n$, and summing over $n\geq2$, we obtain
\begin{align*}
A(t;u,v)&=\frac{t^2}{(1-ut)(1-t)}+\frac{t}{1-u}(A(t;1,uv)-uA(t;u,v))\\
&~~+\frac{uvt}{(1-v)(1-ut)}(A(t;u,v)-vA(t;uv,1)).
\end{align*}
Substituting $u=1/v$ into the last equation then yields
\begin{align*}
\left(1+\frac{t}{v-1}-\frac{vt}{(1-v)(v-t)}\right)A(t;1/v,v)&=\frac{vt^2}{(1-t)(v-t)}+\frac{vt}{v-1}A(t;1,1)\\
&~~-\frac{v^2t}{(1-v)(v-t)}A(t;1,1),
\end{align*}
which is equivalent to
\begin{align}\label{t3e2}
\left(1-\frac{t(2v-t)}{(1-v)(v-t)}\right)A(t;1/v,v)&=\frac{vt^2}{(1-t)(v-t)}-\frac{vt(2v-t)}{(1-v)(v-t)}A(t;1,1).
\end{align}
To solve \eqref{t3e2}, we use the kernel method. If we set the coefficient of $A(t;1/v,v)$ in \eqref{t3e2} equal to zero, and solve for $v=v_o$ in terms of $t$, we obtain
$$v_o=\frac{1-t+\sqrt{1-6t+5t^2}}{2}.$$
Substituting $v=v_o$ into \eqref{t3e2} then gives
$$A(t;1,1)=\frac{t(1-v_o)}{(1-t)(2v_o-t)}=\frac{1-3t-\sqrt{1-6t+5t^2}}{2(1-t)}.$$
(Note that there were two possible values for $v_o$, and our choice for $v_o$ was dictated by the condition $A(0;1,1)=0$.)
Thus the generating function for the sequence $A_{1012}(n)=1+\sum_{t=2}^n\sum_{s=1}^{t-1}a_{n,t,s}$, $n\geq1$, is given by
$$\frac{t}{1-t}+A(t;1,1)=\frac{1-t-\sqrt{1-6t+5t^2}}{2(1-t)}=\frac{1}{2}\left(1-\sqrt{\frac{1-5t}{1-t}}\right).$$
It is easily verified that this is also the generating function for the Catalan transform sequence $\sum_{i=0}^{n-1}\binom{n-1}{i}C_i$, $n \geq 1$, which completes the proof.
\end{proof}

\subsection{The case 0123}

Here, we determine $A_{0123}(n)$ and answer Conjecture 3.4 above in the affirmative.

\begin{theorem}\label{t4}
Let $a_n=A_{0123}(n)$.  Then $a_n$ is given by the recurrence
\begin{equation}\label{t4e0}
a_n=5a_{n-1}-6a_{n-2}+a_{n-3}, \qquad n \geq 3,
\end{equation}
with $a_0=a_1=1$ and $a_2=2$.
\end{theorem}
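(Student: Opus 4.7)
The plan is to mimic the kernel-method strategy of Theorems~\ref{t2}--\ref{t3}. Avoidance of $0123$ is equivalent to requiring that the longest strictly increasing subsequence of $\pi$ have length at most $3$. For a partial ascent sequence $\pi$, a new last letter $x$ is allowed iff $x \le \asc(\pi)+1$ and $x$ does not complete a length-$4$ strictly increasing subsequence, i.e.\ $x \le t(\pi)$, where $t(\pi) := \min\{v : \pi \text{ has an increasing length-}3 \text{ subsequence ending with } v\}$ (with $t(\pi) = +\infty$ if no such subsequence exists). I would refine $A_{0123}(n)$ by the triple (last letter, $\asc(\pi)$, $t(\pi)$), write recurrences for the refined array by analyzing how each component updates under the append operation (splitting into the cases $x < $ last letter, $x = $ last letter, and $x > $ last letter, the last of which raises $\asc$ and may also decrease $t$), and assemble these into a multivariate functional equation of the same shape as Lemma~\ref{l3}.

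Solving via the kernel method---substituting the kernel root of the functional equation for one of the auxiliary variables in order to cancel the unknown series---should give a rational expression for $\sum_n A_{0123}(n) x^n$, which I expect to simplify to $(1-4x+3x^2)/(1-5x+6x^2-x^3)$. This is precisely the Chebyshev-type generating function for Dyck paths of semilength $n$ and height at most~$5$, namely the $5$th convergent of the continued fraction $1/(1-x/(1-x/(1-\cdots)))$, as follows from the standard recursion $D_k(x) = 1/(1-xD_{k-1}(x))$ with $D_0=1$. From this rational form, the recurrence $a_n = 5a_{n-1}-6a_{n-2}+a_{n-3}$ follows immediately by clearing the denominator and matching coefficients of $x^n$, while the initial values $a_0=a_1=1$ and $a_2=2$ are verified by direct enumeration of $\mathcal{S}_{0123}(n)$ for $n\le 2$.

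The main obstacle is that the ``natural'' state $(\text{last letter},\asc(\pi),t(\pi))$ has an a priori unbounded range, so the resulting functional equation threatens to involve infinitely many auxiliary series. To overcome this I would look for a more economical refinement: the degree~$3$ of the target denominator strongly suggests that the underlying transfer structure is driven by three effective states (corresponding to the current LIS value lying in $\{1,2,3\}$). A plausible alternative is to track the pile-top values $t_1<t_2<\cdots$ produced by patience-sorting $\pi$ (at most three of them, with the last equal to $t(\pi)$ and their count equal to the LIS), so that the evolution of the state under appending a letter becomes a finite transition; once the state has been reduced to a finite number of cases the functional equation becomes a small linear system with polynomial coefficients and the kernel-method computation is routine. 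Should this reduction prove too delicate, a fallback is to establish a direct bijection between $\mathcal{S}_{0123}(n)$ and Dyck paths of semilength~$n$ and height at most~$5$, whereupon the recurrence \eqref{t4e0} follows from the transfer-matrix formalism on the Dyck-path side.
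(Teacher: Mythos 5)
Your target generating function is the right one --- $\frac{1-4x+3x^2}{1-5x+6x^2-x^3}$ does equal the paper's $\frac{(1-x)(1-3x)}{1-5x+6x^2-x^3}$, and your continued-fraction identification with Dyck paths of height at most $5$ is correct --- but what you have written is a plan rather than a proof, and the plan has a genuine gap at exactly the point you flag yourself. The refined state $(\text{last letter},\asc(\pi),t(\pi))$ has unbounded range, so no closed functional equation of the shape of Lemma \ref{l3} is ever actually derived; nothing in the proposal shows that the kernel method applies or that the unknown series cancel. The suggested repair via patience-sorting pile tops does not obviously close up into finitely many states either: which letters may be appended depends on the \emph{numeric values} of the pile tops compared with $\asc(\pi)+1$ (the ascent-sequence ceiling), not merely on how many piles there are, and both quantities grow with $n$. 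So the claim that ``three effective states'' drive a degree-$3$ denominator is a hope read off from the answer, not an argument. The fallback --- a direct bijection between $\mathcal{S}_{0123}(n)$ and Dyck paths of semilength $n$ and height at most $5$ --- cannot be invoked either: constructing such a bijection is precisely what the paper states as an open problem after Theorem \ref{t4}. Only the initial values $a_0=a_1=1$, $a_2=2$ and the final step (rational generating function $\Rightarrow$ recurrence \eqref{t4e0}) are solid.

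For contrast, the paper does not use the kernel method here at all. It splits off the $2^{n-1}$ binary sequences and observes a rigid structure for any $0123$-avoiding ascent sequence with at least three distinct letters: writing it as $\alpha(\ell+1)\beta$ where $\ell+1$ is the leftmost occurrence of the largest letter, every letter of $\alpha$ is $0$ or $1$, and within $\beta$ the letters from $[2,\ell+1]$ must form a non-increasing subsequence while $0$'s and $1$'s are unrestricted. Counting the choices for $\alpha$, the placement of $0$'s and $1$'s in $\beta$, and the non-increasing subsequence yields an explicit multiple sum for $b_n$, whose generating function telescopes to $\frac{x^3(1-x)}{(1-2x)(1-5x+6x^2-x^3)}$; adding $\frac{1-x}{1-2x}$ gives the rational function above and hence \eqref{t4e0}. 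If you want to salvage your transfer-matrix idea, you would need to prove a finite-state description of this structure (in effect, rediscovering the decomposition just described), which is the missing content of your proposal.
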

\begin{proof}
Let $b_n$ denote the number of $0123$-avoiding ascent sequences having at least three distinct letters. Then $a_n=b_n+2^{n-1}$ if $n \geq 1$, with $a_0=1$, upon including all possible binary sequences.  We first find an explicit formula for $b_n$. To do so, suppose that an ascent sequence $\pi$ enumerated by $b_n$ has largest letter $\ell+1$, where the left-most occurrence of $\ell+1$ corresponds to the larger number in the $r$-th ascent for some $r$.  Since $\pi$ avoids $0123$, the only letters occurring prior to $\ell+1$ are $0$'s and $1$'s.  Since exactly $r-1$ ascents involving $0$ and $1$ occur prior to the left-most $\ell+1$, we must have $2r-2\leq n-1$ so that $2 \leq r \leq \frac{n+1}{2}$. By definition of an ascent sequence, we have $\ell+1 \leq r$ so that $1 \leq \ell \leq r-1$.  Then $\pi$ must be of the form
$$\pi=\alpha(\ell+1)\beta,$$
where $\alpha$ is a binary sequence starting with $0$ having exactly $r-1$ ascents for some $2 \leq r \leq \frac{n+1}{2}$, $\ell+1$ is the largest letter for some $1 \leq \ell \leq r-1$, and $\beta$ is possibly empty.

Suppose further that $|\alpha|=i$ and that the total length of the first $2r-2$ runs of letters is $j$, where $2r-2 \leq i \leq n-1$ and $2r-2 \leq j \leq i$.  Then there are $\binom{j-1}{2r-3}$ choices for the first $j$ letters of $\alpha$, with the last $i-j$ letters of $\alpha$ being $0$.  We now turn our consideration to the subword $\beta$, which is of length $n-i-1$.  The subsequence $L$ comprising all letters of $\beta$ from the set $[2,\ell+1]$ must be non-increasing, with $L$ possibly empty or comprising all of $\beta$.  Furthermore, there is no restriction within $\beta$ concerning the relative positions of $0$'s and $1$'s.  Let $s=|L|$, where $0 \leq s \leq n-i-1$.  Then there are $n-i-s-1$ positions to be occupied by $0$'s and $1$'s within $\beta$, and thus there are
$$\binom{n-i-1}{n-i-s-1}2^{n-i-s-1}=\binom{n-i-1}{s}2^{n-i-s-1}$$
possibilities for these letters.  Once the choices for the positions of the $0$'s and $1$'s within $\beta$ have been made, there are $\binom{s+\ell-1}{\ell-1}$ choices for the subsequence $L$ since it is of length $s$ with its letters in non-increasing order coming from the set $[2,\ell+1]$.  Summing over all possible $r$, $\ell$, $i$, $j$, and $s$, we obtain
\begin{equation}\label{t4e1}
b_n=\sum_{r=2}^{\frac{n+1}{2}}\sum_{\ell=1}^{r-1}\sum_{i=2r-2}^{n-1}\sum_{j=2r-2}^i \sum_{s=0}^{n-i-1} \binom{j-1}{2r-3}\binom{n-i-1}{s}\binom{s+\ell-1}{\ell-1}2^{n-i-s-1}, \qquad n \geq 1.
\end{equation}

We now compute the generating function $\sum_{n \geq 1}b_nx^n$.  By \eqref{t4e1}, we have
\begin{align*}
\sum_{n \geq 1}&b_nx^n=\sum_{n\geq 1}x^n\left(\sum_{r=2}^{\frac{n+1}{2}}\sum_{\ell=1}^{r-1}\sum_{i=2r-2}^{n-1}\sum_{j=2r-2}^i \sum_{s=0}^{n-i-1} \binom{j-1}{2r-3}\binom{n-i-1}{s}\binom{s+\ell-1}{\ell-1}2^{n-i-s-1}\right)\\
&=\sum_{\ell \geq 1}\sum_{r \geq \ell+1} \sum_{j \geq 2r-2} \sum_{i \geq j} \sum_{s \geq 0} \sum_{n\geq i+s+1}\binom{j-1}{2r-3}\binom{n-i-1}{s}\binom{s+\ell-1}{\ell-1}2^{n-i-s-1}x^n\\
\end{align*}
\begin{align*}
&=\sum_{\ell \geq 1}\sum_{r \geq \ell+1} \sum_{j \geq 2r-2} \sum_{i \geq j}\sum_{s \geq0} \binom{j-1}{2r-3}\binom{s+\ell-1}{\ell-1}2^{-s}x^{i+1}\sum_{n \geq i+s+1}\binom{n-i-1}{s}(2x)^{n-i-1}\\
&=\frac{1}{1-2x}\sum_{\ell \geq 1}\sum_{r \geq \ell+1} \sum_{j \geq 2r-2} \sum_{i \geq j}\binom{j-1}{2r-3}x^{i+1}\sum_{s \geq 0} \binom{s+\ell-1}{\ell-1}\left(\frac{x}{1-2x}\right)^s\\
&=\frac{1}{1-2x}\sum_{\ell \geq 1}\sum_{r \geq \ell+1} \sum_{j \geq 2r-2} \sum_{i \geq j}\binom{j-1}{2r-3}x^{i+1}\left(\frac{1-2x}{1-3x}\right)^{\ell},\\
\end{align*}
where we have used the fact $\sum_{n \geq i}\binom{n}{i}x^n=\frac{x^i}{(1-x)^{i+1}}$.  Rearranging factors in the last sum implies
\begin{align*}
\sum_{n\geq1}b_nx^n&=\frac{1}{1-2x}\sum_{\ell \geq 1}\left(\frac{1-2x}{1-3x}\right)^{\ell} \sum_{r \geq \ell+1}\sum_{j \geq 2r-2} \binom{j-1}{2r-3}\sum_{i \geq j}x^{i+1}\\
&=\frac{x^2}{(1-x)(1-2x)}\sum_{\ell \geq 1}\left(\frac{1-2x}{1-3x}\right)^{\ell} \sum_{r \geq \ell+1}\sum_{j \geq 2r-2} \binom{j-1}{2r-3}x^{j-1}\\
&=\frac{x}{(1-x)(1-2x)}\sum_{\ell \geq 1}\left(\frac{1-2x}{1-3x}\right)^{\ell} \sum_{r \geq \ell+1}\left(\frac{x}{1-x}\right)^{2r-2}\\
&=\frac{x(1-x)}{(1-2x)^2}\sum_{\ell \geq 1}\left(\frac{1-2x}{1-3x}\right)^{\ell}\cdot\left(\frac{x}{1-x}\right)^{2\ell}\\
&=\frac{x(1-x)}{(1-2x)^2}\cdot \frac{x^2(1-2x)}{(1-x)^2(1-3x)}\cdot \frac{1}{1-\frac{x^2(1-2x)}{(1-x)^2(1-3x)}}\\
&=\frac{x^3(1-x)}{(1-2x)(1-5x+6x^2-x^3)}.
\end{align*}

Then we have
\begin{align*}
\sum_{n \geq 0}a_nx^n&=1+\sum_{n\geq 1}2^{n-1}x^n+\sum_{n\geq 1}b_nx^n\\
&=\frac{1-x}{1-2x}+\frac{x^3(1-x)}{(1-2x)(1-5x+6x^2-x^3)}\\
&=\frac{(1-x)(1-3x)}{1-5x+6x^2-x^3},
\end{align*}
which implies $A_{0123}(n)$ is given by \eqref{t4e0}.
\end{proof}

Thus $A_{0123}(n)$ coincides with sequence A080937 in \cite{Sl}, which also counts the Dyck paths of semilength $n$ and height at most $5$, and it would be interesting to determine a direct bijection.

\subsection{Some remarks on the case $210$}

Duncan and Steingr\'{\i}msson \cite{DS} made the following conjecture concerning the avoidance of $210$ by ascent sequences:

\textbf{Conjecture 3.3.} \emph{The number $A_{210}(n)$ equals the number of non-3-crossing set partitions of $\{1,2,\ldots,n\}$.  See sequence A108304 in \cite{Sl}.}

While we were unable to enumerate the members of $\mathcal{S}_{210}(n)$ and confirm Conjecture 3.3, we did determine some combinatorial structure in this case.  Perhaps the recurrence in either of the propositions below  would be a first step in proving this conjecture, once the proper technique is applied.  Given $m \geq 0$ and $0 \leq s \leq r \leq m$, let $C_{n,m,r,s}$ denote the subset of $\mathcal{S}_{210}(n)$ whose members have exactly $m$ ascents, largest letter $r$, and last letter $s$.  For example, $\pi=012330115223 \in C_{12,6,5,3}$. Let $f_{m,r,s}=f_{m,r,s}(x)$ be the generating function (g.f.) which counts the members of $C_{n,m,r,s}$, where $m$, $r$, and $s$ are fixed.  Note that $f_m(x)=\sum_{r,s}f_{m,r,s}(x)$ is the g.f. counting the members of $\mathcal{S}_{210}(n)$ having exactly $m$ ascents and $f(x)=1+\sum_{m\geq 0}f_m(x)$ is the g.f., counting all members of $\mathcal{S}_{210}(n)$.

The following proposition provides a recurrence for the $f_{m,r,s}$.

\begin{proposition}\label{p1}
The array of generating functions $f_{m,r,s}$, where $m \geq 0$ and $0\leq s \leq r \leq m$, is determined by the initial condition $f_{0,0,0}=\frac{x}{1-x}$, and for $m \geq 1$, the recurrences
\begin{align}
f_{m,r,s}&=\frac{x}{1-x}\sum_{i=0}^{s-1}f_{m-1,r,i}+\frac{x}{1-x}\sum_{j=s+1}^{r-1}f_{m-1,j,s}+\frac{x^2}{(1-x)^2}\sum_{i=0}^s\sum_{j=i}^r f_{m-1,j,i}\notag\\
&~~-\frac{x^2}{(1-x)^2}\sum_{j=s+1}^{r-1}\sum_{i=0}^{s-1}f_{m-2,j,i}, \qquad 0 \leq s < r \leq m, \label{p1e1}
\end{align}
and
\begin{equation}\label{p1e2}
f_{m,r,r}=\frac{x}{1-x}\sum_{j=0}^{r}\sum_{i=0}^j f_{m-1,j,i}-\frac{x}{1-x}f_{m-1,r,r}, \qquad 1 \leq r \leq m.
\end{equation}
\end{proposition}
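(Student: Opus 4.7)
My plan is to prove the recurrence by decomposing each $\pi$ in $C_{n,m,r,s}$ according to its final run(s) of equal letters and then organising the resulting cases with a small inclusion--exclusion. The initial condition $f_{0,0,0}=\frac{x}{1-x}$ just records that the only $\pi\in C_{n,0,0,0}$ is $0^n$ for $n\ge 1$.

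For \eqref{p1e2} I would write $\pi=\pi'r^k$ with $k\ge 1$ and $\pi'$ not ending in $r$. Appending the current maximum preserves $210$-avoidance automatically (a new $210$-pattern would need a letter exceeding $r$ to its right, which is impossible), so $\pi'$ ranges freely over all $210$-avoiding ascent sequences counted by $f_{m-1,j,i}$ with $0\le i\le j\le r$ and $(j,i)\ne(r,r)$, the excluded pair being the forbidden ``$\pi'$ ends in $r$''. Summing and inserting the factor $\frac{x}{1-x}$ for the appended $r^k$ gives \eqref{p1e2} at once.

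For \eqref{p1e1} the key structural input is this: if $\pi$ is $210$-avoiding with $\pi_n=s$ and $s<r$, then the subsequence of letters of $\pi$ strictly greater than $s$ must be weakly increasing (otherwise a descending pair exceeding $s$ together with $\pi_n=s$ gives a $210$-pattern). In particular, the last letter of $\pi$ exceeding $s$ must equal $r$. Writing $\pi=\pi's^k$ with $k\ge 1$ and $\pi'$ not ending in $s$, I let $t$ be the last letter of $\pi'$. When $t<s$, the condition ``no descending pair $>s$ in $\pi'$'' is automatic (since $\pi'$ ends in $t\le s$, its own $210$-avoidance forces the subsequence above $t$, hence above $s$, to be weakly increasing); then $\pi'$ ranges over what $f_{m-1,r,t}$ counts, and summing gives the first term of \eqref{p1e1}. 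When $t>s$, the structural input forces $t=r$; writing $\pi'=\pi''r^{\ell}$ with $\ell\ge 1$ and $\pi''$ not ending in $r$ produces $\pi=\pi''r^{\ell}s^k$ with factor $\frac{x^2}{(1-x)^2}$. Letting $u$ be the last letter of $\pi''$, the subcase $u\le s$ again carries the ``$s$-good'' property (subsequence above $s$ weakly increasing) for free, and summing over $u\in\{0,\ldots,s\}$ and $j\in\{u,\ldots,r\}$ yields the third term of \eqref{p1e1}.

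The subtle subcase is $u>s$. Here the structural observation applied to $\pi''$ forces $\max(\pi'')=u=j$ for some $j\in\{s+1,\ldots,r-1\}$, and $\pi''$ must additionally be $s$-good. I plan to enumerate these $\pi''$ via the bijection $\pi''\leftrightarrow\pi''s^c$ ($c\ge 1$) with the subset of sequences counted by $f_{m-1,j,s}$ whose final $s$-run is preceded by a letter $>s$; the map is $s$-good-preserving and creates no new ascent (because $u>s$ makes the $u\to s$ transition a descent). The complementary subset---those $\pi'$ counted by $f_{m-1,j,s}$ whose final $s$-run is preceded by a letter $i<s$---decomposes as $\pi'=\widetilde\pi s^c$ with $c\ge 1$ and $\widetilde\pi$ ending in $i$, where the $i\to s$ ascent accounts for one extra ascent, so $\widetilde\pi$ is counted by $f_{m-2,j,i}$; the corresponding generating function is $\frac{x}{1-x}\sum_{i=0}^{s-1}f_{m-2,j,i}$. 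Subtracting this from $f_{m-1,j,s}$, multiplying by $\frac{x}{1-x}$ to insert the $r^{\ell}$ run, and summing over $j\in\{s+1,\ldots,r-1\}$ yields exactly the second plus fourth terms of \eqref{p1e1}; combining with the first and third terms gives \eqref{p1e1}. The main obstacle will be the careful verification of the $s$-good-preserving bijection in both directions and the ascent-count bookkeeping in the complementary decomposition; once those are in place, the rest is routine.
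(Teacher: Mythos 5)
Your proposal is correct and follows essentially the same route as the paper's proof: you condition on the run(s) preceding the final $s$-run (letter $<s$, or the maximum $r$, with the further split on the run before the $r$-run), and you handle the subcase $j\in[s+1,r-1]$ by the same subtraction $f_{m-1,j,s}-\frac{x}{1-x}\sum_{i=0}^{s-1}f_{m-2,j,i}$ that the paper encodes via its starred generating function, while \eqref{p1e2} is the paper's one-run/several-runs split phrased as ``all pairs $(j,i)$ except $(r,r)$.'' Your explicit ``letters above $s$ are weakly increasing'' lemma is just a sharpened form of the paper's parenthetical 210-avoidance argument, and it correctly supplies the reverse-direction checks the paper leaves implicit.
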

\begin{proof}
The initial condition is clear, the enumerated sequences being those consisting of all zeros.  To show \eqref{p1e1}, first observe that an ascent sequence enumerated by $f_{m,r,s}$ in this case must have at least three distinct runs of letters, with the last run of the letter $s$.  We condition on the letter occurring in the next-to-last run.  Note that this letter is either $i$ for some $0 \leq i \leq s-1$ or is $r$, for otherwise there would be an occurrence of $210$.  Then the term $\frac{x}{1-x}\sum_{i=0}^{r-1}f_{m-1,r,i}$ is the g.f. counting all members of $C_{n,m,r,s}$ in the first case since the $s$'s occurring in the final run are seen to be extraneous concerning a possible occurrence of $210$ and thus may be removed without loss of structure (whence the factor $\frac{x}{1-x}$ which accounts for these letters).

Now assume the next-to-last run is of the letter $r$.  We differentiate two further cases where the second-to-last run is (i) of the letter $i$ for some $0 \leq i \leq s$ or (ii) of the letter $j$ for some $j \in [s+1,r-1]$.  In case (i), one may simply delete the two final runs of letters, noting that an ascent has been removed in so doing, which implies the g.f. in this case is given by $\frac{x^2}{(1-x)^2}\sum_{i=0}^s\sum_{j=i}^sf_{m-1,j,i}$.

In case (ii), we may delete the final run of the letter $r$, which removes an ascent but does not otherwise affect the structure. The resulting ascent sequences are then enumerated by $\frac{x}{1-x}\sum_{j=s+1}^{r-1}f_{m-1,j,s}^*$, where $f_{m-1,j,s}^*$ is the g.f. which counts the same ascent sequences as $f_{m-1,j,s}$ but with the added condition that the next-to-last run is of the letter $j$.  Observe that
$$f_{m-1,j,s}^*=f_{m-1,j,s}-\frac{x}{1-x}\sum_{i=0}^{s-1}f_{m-2,j,i},$$
upon subtracting the g.f. for those sequences whose next-to-last run is of a letter smaller than $s$.  Combining the three cases above gives \eqref{p1e1}.

For \eqref{p1e2}, considering separately the cases in which the letter $r$ occurs in (i) exactly one run or in (ii) two or more runs yields
\begin{align*}
f_{m,r,r}&=\frac{x}{1-x}\sum_{j=0}^{r-1}\sum_{i=0}^j f_{m-1,j,i}+\frac{x}{1-x}\sum_{i=0}^{r-1}f_{m-1,r,i}\\
&=\frac{x}{1-x}\sum_{j=0}^{r}\sum_{i=0}^j f_{m-1,j,i}-\frac{x}{1-x}f_{m-1,r,r},
\end{align*}
which completes the proof.
\end{proof}

Note that $f_{m,0,0}=0$ for all $m \geq 1$ and that $f_{m,r,s}$ is always of the form
$\frac{p(x)}{(1-x)^{2m+1}}$ for some polynomial $p(x)$ of degree
at most $2m+1$. Using the recurrences, one gets for $m=1$,
$$f_{1,1,0}=\frac{x^3}{(1-x)^3} \text{~and~}
f_{1,1,1}=\frac{x^2}{(1-x)^2},$$
which implies
$f_1(x)=\frac{x^2}{(1-x)^3}$.

When $m=2$, one gets
$$f_{2,1,0}=\frac{x^5}{(1-x)^5},~f_{2,1,1}=\frac{x^4}{(1-x)^4},~f_{2,2,0}=\frac{x^4}{(1-x)^5}=f_{2,2,1} \text{~and~}
f_{2,2,2}=\frac{x^3}{(1-x)^4},$$
which implies $f_2(x)=\frac{x^3(1+2x)}{(1-x)^5}$.

Since members of $\mathcal{S}_{210}(n)$ seem to be equinumerous with the partitions of $[n]$ having no $3$-crossings, perhaps the techniques used in \cite{BX} to enumerate the latter could be applied to the former in the absence of an obvious bijection between the two structures.

One may also determine recurrences similar to those in Lemmas \ref{l1} and \ref{l4} above.  Let $C_{n,m,r,s}$ be as defined above  and let $D_{n,m,r,s}$ consist of those members of $C_{n,m,r,s}$ whose next-to-last letter is $r$.  We have the following recurrences for the numbers $c_{n,m,r,s}=|C_{n,m,r,s}|$ and $d_{n,m,r,s}=|D_{n,m,r,s}|$.

\begin{proposition}\label{p2}
The arrays $c_{n,m,r,s}$ and $d_{n,m,r,s}$ may assume non-zero values only when $n \geq 1$ and $0 \leq s \leq r \leq m < n$.  For $n \geq 2$, they satisfy the recurrences
\begin{equation}\label{p2e1}
c_{n,m,r,s}=c_{n-1,m,r,s}+d_{n,m,r,s}+\sum_{i=0}^{s-1}c_{n-1,m-1,r,i}, \qquad 0 \leq s < r \leq m,
\end{equation}
\begin{equation}\label{p2e2}
d_{n,m,r,s}=d_{n-1,m,r,s}+\sum_{i=s+1}^{r-1}d_{n-1,m-1,i,s}+\sum_{i=0}^s\sum_{j=i}^r c_{n-2,m-1,j,i}, \qquad 0 \leq s<r \leq m,
\end{equation}
\begin{equation}\label{p2e3}
c_{n,m,r,r}=d_{n,m,r,r}+\sum_{i=0}^{r-1}\sum_{j=i}^rc_{n-1,m-1,j,i}, \qquad 0 \leq r \leq m,
\end{equation}
and
\begin{equation}\label{p2e4}
d_{n,m,r,r}=c_{n-1,m,r,r}, \qquad 0 \leq r \leq m,
\end{equation}
with the initial conditions $c_{1,0,0,0}=1$ and $d_{1,0,0,0}=0$.
\end{proposition}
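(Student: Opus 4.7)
The plan is to prove each of the four recurrences \eqref{p2e1}--\eqref{p2e4} by a direct combinatorial decomposition, with the basic strategy in each case being to condition on one of the last few letters of $\pi \in C_{n,m,r,s}$ (resp.\ $D_{n,m,r,s}$) and to show that the resulting subcases are in bijection with the smaller classes enumerated on the right-hand side. The role of the $210$-avoidance hypothesis is twofold: it forbids certain values from appearing at the conditioning position, and it ensures that the obvious inverse maps (appending one or two specific letters to the end of a shorter sequence) introduce no new occurrence of $210$.

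For \eqref{p2e1}, assume $s<r$ and let $t:=\pi_{n-1}$. The key observation is that $s<t<r$ is impossible: the largest letter $r$ must occur at some $p\le n-2$, and then $(\pi_p,\pi_{n-1},\pi_n)=(r,t,s)$ would realize $210$. The three remaining cases $t=s$, $t=r$, and $t<s$ correspond respectively to the three terms on the right; the only slightly subtle one is $t<s$, where deletion of $\pi_n$ removes the ascent $t\to s$ but must be shown to preserve the maximum $r$, which holds since $r$ still appears in $\pi_1\cdots\pi_{n-1}$.

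For \eqref{p2e2}, assume $s<r$ and condition on $t:=\pi_{n-2}$, given $\pi_{n-1}=r$ and $\pi_n=s$. The case $t=r$ allows deletion of one of the two consecutive $r$'s, giving the term $d_{n-1,m,r,s}$. The case $s<t<r$ requires noting that $210$-avoidance (via $(r,t,s)$ whenever an earlier $r$ exists, and via $(\pi_i,t,s)$ for any $i<n-2$ with $\pi_i>t$) forces the $r$ at position $n-1$ to be the unique $r$ in $\pi$ and forces $\pi_i\le t$ for all $i<n-2$; deletion of $\pi_{n-1}$ then produces a bijection onto $\bigcup_{i=s+1}^{r-1}D_{n-1,m-1,i,s}$. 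The case $t\le s$, which I expect to be the main obstacle, corresponds to deleting the last two letters. Here one must verify two things: (i) in the forward direction, no further constraint on $\pi_1\cdots\pi_{n-2}$ arises beyond the largest letter being $\le r$ and the last letter being $\le s$; and (ii) in the inverse direction, appending $(r,s)$ to an arbitrary $\sigma\in C_{n-2,m-1,j,i}$ with $i\le s$ and $j\le r$ remains $210$-avoiding. The crux of (ii) is that any hypothetical bad pair $\sigma_a>\sigma_b>s$ with $a<b\le n-2$ would combine with $\sigma_{n-2}=i\le s$ to form a $210$ inside $\sigma$, which is ruled out by assumption; one must also check that the growth condition $r\le\asc(\sigma)+1$ for the appended letter is exactly the hypothesis $r\le m$.

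The remaining recurrences \eqref{p2e3} and \eqref{p2e4}, which concern the case $s=r$, are considerably easier. For \eqref{p2e4}, a direct bijection $\pi\leftrightarrow\pi_1\cdots\pi_{n-1}$ (removing the last $r$) between $D_{n,m,r,r}$ and $C_{n-1,m,r,r}$ works immediately, since all relevant statistics are preserved. For \eqref{p2e3} one conditions on $t:=\pi_{n-1}$: the case $t=r$ gives the $d_{n,m,r,r}$ term, and the case $t<r$ gives a bijection onto $\bigcup_{i=0}^{r-1}\bigcup_{j=i}^{r} C_{n-1,m-1,j,i}$ (the inner sum reflects the fact that the largest letter of the reduced sequence may be anywhere in $[i,r]$, depending on whether the deleted $r$ was the unique occurrence or not). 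The initial conditions $c_{1,0,0,0}=1$ and $d_{1,0,0,0}=0$ are immediate since the length-one sequence $0$ has no next-to-last letter, and once the delicate bookkeeping in \eqref{p2e2} is completed the remaining cases assemble routinely.
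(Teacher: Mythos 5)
Your proposal is correct and follows essentially the same route as the paper: conditioning on the penultimate letter of $\pi\in C_{n,m,r,s}$ (resp.\ $C_{n,m,r,r}$) for \eqref{p2e1} and \eqref{p2e3}, on the antepenultimate letter of $\pi\in D_{n,m,r,s}$ for \eqref{p2e2}, and deleting the final $r$ for \eqref{p2e4}, with $210$-avoidance ruling out intermediate values at the conditioning position. Your extra verifications of the inverse maps (appending letters creates no $210$ and respects the ascent-sequence growth condition) are correct details that the paper leaves implicit.
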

\begin{proof}
The first statement is clear from the definitions, as are the initial conditions.  Considering whether the penultimate letter within a member of $C_{n,m,r,s}$ is $s$, $r$, or $i$, where $0 \leq i \leq s-1$, yields \eqref{p2e1}.  Note that in the last case, an ascent is lost when the final $s$ is removed, but not in the first case.  Considering whether the antepenultimate letter within a member of $D_{n,m,r,s}$ is $r$ or $i \in \{s+1,s+2,\ldots,r-1\}$ or $i \in\{0,1,\ldots,s\}$ yields \eqref{p2e2}.  Note that the removal of the right-most $r$ in the second case yields a member of $D_{n-1,m-1,i,s}$, while the removal of the right-most $r$ and $s$ in the third case results in a member of $C_{n-2,m-1,j,i}$ for some $j \geq i$.  For \eqref{p2e3}, consider whether the penultimate letter within a member of $C_{n,m,r,r}$ is $r$ or $i\in\{0,1,\ldots,r-1\}$.  In the latter case, we remove the final $r$ and the resulting sequence belongs to $C_{n-1,m-1,j,i}$ for some $j \geq i$.  Finall
 y, for \eqref{p2e4}, note that members of $D_{n,m,r,r}$ are synonymous with members of $C_{n-1,m,r,r}$ upon removing the final $r$.
\end{proof}

Perhaps applying the technique of the second section above to the recurrences in the last proposition, and modifying it somewhat, might enable one to determine the sequence $A_{210}(n)$.


\end{document}